\newtheorem{thm}{Theorem}[section]
\newtheorem{rem}{Remark}[section]
\newtheorem{prop}{Proposition}[section]
\begin{document}

\begin{center}
\bf Chaos in Dynamics of a Family of Transcendental Meromorphic Functions
\end{center}

\begin{center}
M. Sajid* and G. P. Kapoor**
\end{center}
\begin{center}
*College of Engineering, Qassim University, Buraidah, Saudi Arabia\\
Email: msajid@qec.edu.sa \\
**Department of Mathematics, Indian Institute of Technology Kanpur, India\\
Email: gp@iitk.ac.in
\end{center}

\begin{abstract}
The characterization and properties of Julia sets of one parameter family of transcendental
meromorphic functions $\zeta_\lambda(z)=\lambda
\frac{z}{z+1} e^{-z}$, $\lambda >0$, $z\in \mathbb{C}$ is investigated in the present paper.
It is found that bifurcations in the dynamics of $\zeta_\lambda(x)$,
$x\in {\mathbb{R}}\setminus \{-1\}$, occur at several parameter
values and the dynamics of the family becomes chaotic when the parameter $\lambda$ crosses certain values. The Lyapunov exponent of $\zeta_\lambda(x)$ for certain values of the parameter $\lambda$ is computed for quantifying the chaos in its dynamics.
 The characterization of the Julia set of the function $\zeta_\lambda(z)$
as complement of the basin of attraction of an attracting real fixed
point of $\zeta_\lambda(z)$ is found here and is applied to computationally simulate the images of the Julia sets of
$\zeta_\lambda(z)$. Further, it is established that the Julia set of
$\zeta_\lambda(z)$ for $\lambda>(\sqrt{2}+1) e^{\sqrt{2}}$ contains
the complement of attracting
periodic orbits of $\zeta_\lambda(x)$. Finally, the results on the dynamics of
functions $\lambda \tan z$, $\lambda \in
{\mathbb{\hat{C}}}\setminus\{0\}$, $E_{\lambda}(z) = \lambda \frac{e^{z} -1}{z}$, $\lambda > 0$
and $f_{\lambda}=\lambda f(z)$, $\lambda>0$, where $f(z)$ has certain properties, are compared with the results found in the present paper.
\end{abstract}
\noindent
{\bf Keywords and phrases:} Bifurcation, chaos, iteration, Julia set, Fatou set, singular value, meromorphic functions, periodic points.  

\section{Introduction}
The dynamical system $(X, f_{\lambda})$, $\lambda \in {\mathbb{R}}$, when the function $f_{\lambda}$ is in a certain class of polynomials, is investigated in~\cite{brannerhub92,cheng89,douadyhub82} to explore its global dynamical behaviour. This study is extended to the family of entire functions in~\cite{dev2001,dong92,gpkmgpp1,kurodajang97,yanagotoh98} that provides powerful mathematical techniques and beautiful computer graphics. Since the Julia sets occur as one of the crucial component in these investigations, their various characterizations and identification of their intrinsic properties are primarily developed in these studies. The work in this direction has also found applications in a number of diverse science and engineering areas wherein simulations of objects of fractal nature are needed~\cite{gontar,govin,nakagawa,psj}. 
However, similar investigations concerning the dynamics of
transcendental meromorphic functions have been scarcely made. Since, for
instance, the iterative processes associated with Newton's method
applied to an entire function often yields a meromorphic function as
the root finder. The initial study of iterations of transcendental meromorphic
functions is mainly found in~\cite{baker1,baker92,berg93,devkeen1}. Further work in this direction has been pursued in~\cite{doming,keenkotus97,zheng2kpre}.

\par
Let $S$ be the class of critically finite (having only finitely many singular values) transcendental meromorphic
function $f(z)$. Baker~\cite{baker92} proved that a function $f(z)$
in the class $S$ has no wandering domains:
\begin{thm}
Let $f \in S$.  Then, $f(z)$ has no  wandering domains.
\label{thm1114}
\end{thm}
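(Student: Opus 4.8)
The plan is to argue by contradiction using Sullivan's method of quasiconformal deformations, adapted to the transcendental meromorphic setting. Suppose, for contradiction, that $f\in S$ has a wandering domain $U$, and write $U_n$ for the Fatou component containing $f^n(U)$, so that the $U_n$ are pairwise disjoint for distinct $n$. Since $f$ has only finitely many singular values, its finitely many critical values can lie in only finitely many of the disjoint components $U_{n+1}$; hence for all but finitely many $n$ the restriction $f\colon U_n\to U_{n+1}$ is an unbranched covering, and the forward orbit eventually avoids a neighbourhood of the finite singular set. This is what will let me transport and control Beltrami coefficients cleanly along the orbit.

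First I would build a family of $f$-invariant complex dilatations. I choose an arbitrary measurable Beltrami coefficient $\nu$ supported on $U$ with $\|\nu\|_\infty<1$ and spread it over the grand orbit $\bigcup_{m,n\ge 0} f^{-m}(U_n)$ by pulling back along the (eventually unbranched) iterates, setting the dilatation equal to $0$ off this grand orbit. Because the $U_n$ are disjoint, this produces a well-defined $f$-invariant Beltrami coefficient $\mu_\nu$ on $\hat{\mathbb C}$ with $\|\mu_\nu\|_\infty<1$. By the measurable Riemann mapping theorem there is a quasiconformal homeomorphism $\phi_\nu$ of $\hat{\mathbb C}$ with complex dilatation $\mu_\nu$, and the $f$-invariance of $\mu_\nu$ guarantees that
\[
f_\nu \;=\; \phi_\nu\circ f\circ\phi_\nu^{-1}
\]
is again a transcendental meromorphic function. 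Moreover $\phi_\nu$ carries the singular values of $f$ to those of $f_\nu$, so $f_\nu$ again lies in $S$ with the same finite number $N$ of singular values.

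Next I would exploit a dimension count. As $\nu$ ranges over Beltrami coefficients on the single domain $U$, the conformal moduli it encodes fill out an infinite-dimensional space, and Sullivan's argument shows that the resulting deformations are genuinely inequivalent: modulo the finitely many normalizations, the assignment $\nu\mapsto f_\nu$ has infinite-dimensional image. On the other hand, every $f_\nu$ is a critically finite meromorphic function with at most $N$ singular values, and the quasiconformal-conjugacy classes of such functions form a \emph{finite}-dimensional parameter space, essentially coordinatized by the locations of the $N$ singular values together with the fixed combinatorial data of $f$. An infinite-dimensional family cannot inject into a finite-dimensional one, and this contradiction shows that no wandering domain $U$ can exist.

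The main obstacle is the step that makes Sullivan's scheme work, namely verifying that distinct choices of $\nu$ on $U$ really do yield non-conjugate maps $f_\nu$, i.e.\ that the wandering domain carries enough moduli to produce an infinite-dimensional deformation space rather than collapsing onto finitely many parameters. A second difficulty, specific to the meromorphic case, is that the surgery must be performed near the poles and near $\infty$, where $f$ is not defined or not locally injective; one must check that the invariant dilatation extends across these points and that $f_\nu$ stays meromorphic there. Both points are ultimately controlled by the finiteness of the singular set, which simultaneously bounds the target parameter space and supplies the eventual unbranched behaviour needed to propagate $\nu$ along the orbit.
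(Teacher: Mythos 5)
The paper does not actually prove this statement: it is quoted as a known result of Baker, Kotus and Lu \cite{baker92}, so there is no in-paper argument to compare yours against. Your sketch follows the standard route --- Sullivan's quasiconformal deformation argument as adapted to finite-type maps by Eremenko--Lyubich and Goldberg--Keen for entire functions and by Baker--Kotus--Lu for meromorphic ones --- and that is indeed the method of the cited source, so the strategy is the right one.

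As a proof, however, there is a genuine gap, and it is exactly the one you flag yourself: the two dimension counts that drive the contradiction are asserted rather than established. First, you need to show that the meromorphic functions quasiconformally (or topologically) equivalent to a given $f\in S$ with $N$ singular values form a finite-dimensional space (of dimension $N+2$ after normalizing by M\"obius transformations); this is a theorem in its own right, not a consequence of informally ``coordinatizing by the locations of the singular values.'' Second, you need to exhibit on the wandering domain $U$ a family of Beltrami coefficients, of dimension exceeding $N+2$, whose associated maps $f_\nu$ are pairwise non-conjugate; the usual device is to take an $(N+3)$-parameter family, normalize the solutions $\phi_\nu$ of the Beltrami equation, show that the map sending $\nu$ to the $\phi_\nu$-images of the singular values cannot be injective into an $(N+2)$-dimensional target, and then rule out the resulting conformal conjugacy by using the disjointness of the components $U_n$. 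Without carrying out these two steps the argument is a correct plan but not a proof. A smaller point: pulling $\nu$ back over the grand orbit is harmless at critical points, since Beltrami coefficients pull back under arbitrary holomorphic maps, so the ``eventually unbranched'' observation, while true, is not needed to define $\mu_\nu$; the finiteness of the singular set is what powers the two dimension counts, not the definition of the invariant dilatation.
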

Bergweiler~\cite{berg93} proved that a function $f \in S$ has no Baker domains. Thus,
\begin{thm}
Let $f \in S$. Then, $f(z)$ has no  Baker domains.
\label{thm1115}
\end{thm}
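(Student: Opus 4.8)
The plan is to rule out Baker domains by showing that any such domain would have to lie simultaneously in the Fatou set and in the escaping set, while the finiteness of the singular set forces the escaping set into the Julia set. First I would recall that a Baker domain of period $p$ is a periodic Fatou component $U_0$ on which the return map satisfies $f^{np} \to \infty$ locally uniformly as $n \to \infty$, with $\infty$ an essential singularity of the relevant iterate. Writing $U_0 \to U_1 \to \cdots \to U_{p-1} \to U_0$ for the cycle, I would observe that $f^{np}|_{U_j} \to \infty$ for every $j$, so that for $z \in U_0$ the full orbit satisfies $f^{np+j}(z) = f^{np}(f^j(z)) \to \infty$; hence $f^m(z) \to \infty$ as $m \to \infty$ and $U_0$ is contained in the escaping set $I(f) = \{z : f^m(z) \to \infty\}$. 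This step deliberately avoids the difficulty that iterates of a meromorphic function are not themselves meromorphic, so that a naive reduction to an invariant domain via $f^p$ is not available.

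The heart of the argument is then to prove that for $f \in S$ one has $I(f) \subseteq J(f)$, an Eremenko--Lyubich type statement. Since $f$ has only finitely many singular values, I would fix $R > 0$ so large that every finite singular value lies in $\{|w| < R\}$. Over the region $W = \{|w| > R\}$ the map $f$ has no singular values, so each component of $f^{-1}(W)$ is mapped by $f$ as an unbranched covering onto $W$; passing to logarithmic coordinates via $\exp \circ F = f \circ \exp$ turns these tracts into a union of simply connected domains on which the transfer map $F$ satisfies a uniform expansion estimate $|F'| \ge c$ for a constant $c > 1$, with $|F'| \to \infty$ deep in the tracts. Because a point of $I(f)$ eventually has all its iterates in $W$, this expansion accumulates along the orbit, so $|(f^m)'|$ grows without bound and the family $\{f^m\}$ cannot be normal at such a point; therefore every escaping point lies in $J(f)$.

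Combining the two steps yields the contradiction: a Baker domain $U_0$ satisfies $U_0 \subseteq F(f)$ by definition yet $U_0 \subseteq I(f) \subseteq J(f)$, and $F(f) \cap J(f) = \emptyset$. Hence $f$ can have no Baker domains.

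I expect the main obstacle to be the expansion estimate in the logarithmic coordinates, and in particular adapting it to the meromorphic setting. For entire functions this is the classical Eremenko--Lyubich construction, but for a meromorphic $f$ one must handle the poles, which are prepoles of $\infty$ and should be shown to lie in $J(f)$, and the fact that $\infty$ is an essential singularity rather than an omitted value; care is needed to ensure that the tracts over $\{|w| > R\}$ and the covering and expansion properties survive near the poles and that the orbit of an escaping point genuinely remains in the expanding region. Once the uniform bound $|F'| \ge c > 1$ is secured on the logarithmic tracts, the failure of normality and the final contradiction are routine.
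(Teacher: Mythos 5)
This statement is not proved in the paper at all; it is quoted from Bergweiler's survey \cite{berg93}, so your attempt must be measured against the standard argument there. Your proposal has a genuine gap at its very first step. For a transcendental meromorphic function with poles, a Baker domain of period $p$ is a periodic component $U_0$ on which $f^{np}\to z_0$ for some $z_0\in\partial U_0$ at which $f^{p}$ fails to be meromorphic; $z_0$ need not be $\infty$. It can be a finite prepole of $\infty$, with $\infty$ occurring only somewhere along the cycle of limit functions of the components $U_0,\dots,U_{p-1}$. In that case the orbit of a point $z\in U_0$ does not tend to $\infty$: one subsequence $f^{np+j}(z)$ tends to $\infty$ while another tends to the finite point $z_0$, so $U_0\not\subseteq I(f)$ and your reduction to the inclusion $I(f)\subseteq J(f)$ does not cover all Baker domains. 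The definition you start from is the one appropriate to entire functions only, and this is precisely the case (functions in ${\cal M}$, $\lambda\tan z$, etc.) where the distinction matters.

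The second step is also not merely the ``main obstacle'' you describe but the point where the approach collapses. The Eremenko--Lyubich change of variable needs every component of $f^{-1}(\{|w|>R\})$ to be an unbounded simply connected tract on which $f$ is a universal covering of $\{|w|>R\}$; only then is the logarithmic transfer map a conformal bijection onto a half-plane whose inverse maps into a domain disjoint from its $2\pi i$-translates, which is what yields the Koebe-type bound $|F'|\ge\frac{1}{4\pi}\left(\operatorname{Re}F-\log R\right)$. When $f$ has poles, $f^{-1}(\{|w|>R\})$ also contains a punctured neighbourhood of every pole, over which $f$ is a finite-degree covering with no expansion, and an escaping orbit of a meromorphic map typically escapes exactly by passing through such neighbourhoods; this version of the argument is only known to extend to meromorphic maps under extra hypotheses such as finitely many poles, and boundedness of the singular set alone is not what makes the theorem true. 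The actual proof for class $S$ uses finiteness, not boundedness, of $\mathrm{sing}(f^{-p})$: one joins $\zeta$ to $f^{p}(\zeta)$ by a curve $\gamma\subset U_0$, observes that the images $f^{np}(\gamma)$ accumulate at $z_0$ while their hyperbolic lengths in $U_0$ stay bounded, chooses a punctured neighbourhood $D\setminus\{z_0\}$ containing no singular value of $f^{p}$ (possible only because the singular set is finite, hence discrete), and reaches a contradiction by comparing the hyperbolic metric of $D\setminus\{z_0\}$ with that of $U_0$ and continuing inverse branches over $D\setminus\{z_0\}$. To salvage your outline you would have to replace the global expansion estimate by this local argument at the (possibly finite) limit point.
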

\par
The purpose of the present paper
is to investigate the dynamics of a one parameter family
of transcendental meromorphic functions $f_{\lambda}(z)$ that have rational
Schwarzian derivative $SD(f_{\lambda})=\biggl(\frac{f_{\lambda}''(z)}{f_{\lambda}'(z)}\biggr)'-\frac{1}{2}\biggl(\frac{f_{\lambda}''(z)}{f_{\lambda}'(z)}
\biggr)^{2}$, are critically finite and possess both
critical as well as asymptotic values. Despite the Schwarzian derivative in our
family being rational, these functions are shown to have
their dynamical behaviour somewhat similar to that of Devaney and Keen's function~\cite{devkeen1}, wherein the Schwarzian derivatives of functions in the family are just polynomial. However, certain basic differences emerge in the dynamics of our family of functions and the dynamics of family of functions considered by Devaney and Keen.
\par
The structure of the present paper is as follows: In
Section~\ref{fpnature}, the existence and nature of fixed points as
well as periodic points of $\zeta_{\lambda}(x)$ is described. The
dynamics of $\zeta_{\lambda}(x)$, $x \in {\mathbb{R}} \backslash
\{-1\}$, is investigated in Section~\ref{bifur3}. In this section,
it is found that $\lambda = 1$ and $\lambda=(\sqrt{2}+1) e^{\sqrt{2}}$ are the
bifurcation points of $\zeta_{\lambda}(x)$.  The Lyapunov exponent of $\zeta_{\lambda}(x)$ is computed and the values of the parameter $\lambda$ are determined for which its Lyapunov exponent is positive, thereby ascertaining chaos in the dynamics of the function for these values of the parameter. The chaotic behaviour
of complex functions in our family is described by Julia sets of
its functions. In Section~\ref{sec:a11}, the characterization of
Julia set $J({\zeta_{\lambda}})$, $0 <\lambda < 1$, as the
complement of the basin of attraction of an attracting real fixed
point of $\zeta_{\lambda}(z)$ is established. Further, in this
section, it is proved that Fatou set of the function
$\zeta_{\lambda}(z)$ for $\lambda = 1$ contains certain intervals of
real line. For $1<\lambda \leq \lambda^{*}$, the characterization of
Julia set $J(\zeta_{\lambda})$ and the property of Fatou set
of $\zeta_{\lambda}(z)$ is found in Section~\ref{sec:c11} and is
seen to be similar to that in the case $0<\lambda< 1$. Further, in
this section, it is shown that Julia set of $\zeta_{\lambda}(z)$
for $\lambda > \lambda^{*}$ contains the subset of real line
consisting of the complement of points attracted to the attracting periodic orbits. In
Section~\ref{algo44}, the characterization of Julia set of
$\zeta_{\lambda}(z)$ for different ranges of parameter value
$\lambda$, obtained in Sections~\ref{sec:a11} and~\ref{sec:c11}, are
used to computationally generate the images of its Julia sets. 
The results obtained in the present paper are finally
compared with some of the results found
in~\cite{devkeen1,devtanger,gpkmgpp1,msgpk,stallard94}. It is found 
that the nonlinear dynamics of functions in our family is little
more chaotic than the nonlinear dynamics of the family of functions
considered in~\cite{devkeen1}, probably due to having more
bifurcation points, although several qualitative features in the
dynamics of functions in the two families are similar. Further, it
is observed in our investigations that overall visual effect of
chaos is milder in our family than in the family of functions
in~\cite{devkeen1}.

\section{Definitions and Basic Results}
\label{fpnature}
 Let
$\mathbb{C}$ and $\mathbb{\hat{C}}$ denote the complex plane and the extended complex
plane respectively. A point $w$ is said to be a critical point of
$f$ if $f'(w)=0$. The value $f(w_{0})$ corresponding to a critical point
$w_{0}$ is called a critical value of $f$. A point $\alpha\in \mathbb{\hat{C}}$ is
said to be an asymptotic value for $f(z)$, if there is a continuous
curve $\gamma (t)$ satisfying $\lim_{t\rightarrow \infty} \gamma
(t)=\infty$ and $\lim_{t\rightarrow \infty} f(\gamma (t))=\alpha$. A singular value of $f$ is either a critical value or an asymptotic value of $f$. A
function is said to be critically finite if it has only finitely
many singular values. The basic results concerning singular values and their applications in determining the global dynamics of a function are found in~\cite{morosawa}. The Lyapunov exponent is an important tool to measure the chaos. It is known ~\cite{hilborn} that, for $ith$ iterate $x_{i},$ a system's behaviour is chaotic if Lyapunov exponent
\begin{equation*}
L(f)=\lim_{k\rightarrow \infty}\frac{1}{k}\sum_{i=0}^{k-1} \ln |f^{\prime}(x_{i})|
\end{equation*}
 of the function $f(x)$ is a positive number. 

\par
Let
\begin{equation*}
{\cal M}=\left\{ \zeta_{\lambda}(z)=\lambda\frac{z}{z+1} e^{-z} : \lambda >0, z \in {\mathbb{C}}\right\}
\end{equation*}
be one parameter family of transcendental meromorphic functions.
It is easily seen that
the functions $\zeta \in {\cal M}$ have two critical values
$\frac{3-\sqrt{5}}{2}\lambda e^\frac{1-\sqrt{5}}{2}$,
$\frac{3+\sqrt{5}}{2}\lambda e^\frac{1+\sqrt{5}}{2}$, one finite
asymptotic value 0 and have rational Schwarzian derivative
$SD(\zeta)=\displaystyle{-\frac{z^4 + 2 z^3 - 3 z^2 - 4 z+18}{2 (z^2 + z
-1)^2}}$. 
\par
The nature of fixed points (i.e. the points $x$ satisfying $\zeta(x)=x$) and periodic points (i.e. the points $x$ satisfying $\zeta^n (x) = x$, $n=2,3,\dots$) of the functions $\zeta\in {\cal M}$, needed in the sequel, are found in the present section.
\par
The number and locations of fixed points of the function $\zeta_{\lambda}(x)$ for $\lambda
>0$ on real axis are described as follows:

Let $\zeta_{\lambda} \in {\cal M}$. Then, besides the fixed point
$x=0$, the locations of real fixed points of the
function $\zeta_{\lambda}(x)=\lambda \frac{x e^{-x}}{x+1}$ are given
by the following: {\it (a)} For $0<\lambda < 1$,
$\zeta_{\lambda}(x)$ has exactly one fixed point and that is in the interval
$(-1, 0)$. {\it (b)} For $\lambda = 1$, $\zeta_{\lambda}(x)$ has no
non-zero fixed point. {\it (c)} For $1< \lambda <\lambda^{*}$,
$\zeta_{\lambda}(x)$ has exactly one fixed point and that is in the interval
$(0, \infty)$, where $\lambda^{*}= (\sqrt{2}+1) e^{\sqrt{2}}$. {\it (d)} For $\lambda \geq\lambda^{*}$,
$\zeta_{\lambda}(x)$ has exactly one fixed point. 

\par
For $\lambda>\lambda^{*}$, the function  $\zeta_{\lambda}(x)$
has periodic points of period greater than or equal to $2$ in $(0, \infty)$ in addition to having one fixed
point. These periodic points are roots of $\displaystyle
\zeta^{n}_{\lambda}(x) \equiv \lambda \frac{\zeta^{n-1}_{\lambda}(x)
e^{-\zeta^{n-1}_{\lambda}(x)}}{\zeta^{n-1}_{\lambda}(x)+1} = x$. In
the case $n=2$, the periodic points $p_{11}$ and $p_{12}$ of $\zeta_{12}(x)$ are computationally obtained as
$p_{11}\approx 0.748218$, $p_{12}\approx 2.43034$ ({\it
Fig.~\ref{fpfigs}}) for $\lambda>\lambda^{*}$.
\begin{figure}[h]
\begin{center}
\includegraphics[width=5.2cm,height=4.8cm]{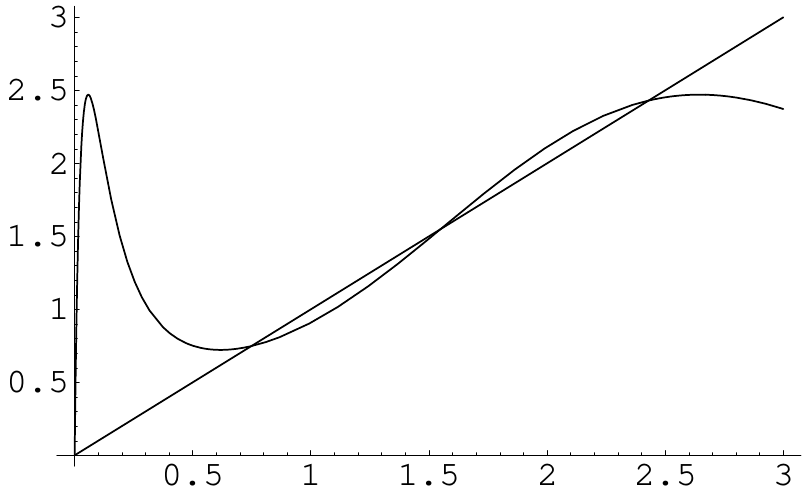} \\
\caption{Periodic Points of period 2 of $\zeta_{\lambda}(x)$ for $\lambda=12$}
\label{fpfigs}
\end{center}
\end{figure} 

\par
The computed value of the fixed point of $\zeta_{\lambda}(x)$ is $-0.314923$ if $\lambda=0.5$ $(0<\lambda<1)$ and this value is $0.374823$ if $\lambda=2$ $(1<\lambda<\lambda^{*})$.

\par
The nature of fixed points of
$\zeta_{\lambda}(x)$ for different values of parameter $\lambda$ is
described in the following theorem:
\begin{thm} \label{thm102}
Let $\zeta_{\lambda} \in {\cal M}$. Then, {\it (i)} If
$0<\lambda<1$, then the fixed point $r_{\lambda} \in (-1, 0)$ is
repelling and the fixed point $0$ is attracting. {\it (ii)} If
$\lambda=1$, then the fixed point $0$ is rationally indifferent .
{\it (iii)} If $1<\lambda<\lambda^{*}$, then the fixed point
$a_{\lambda} \in (0, \sqrt{2})$ is attracting and the fixed point
$0$ is repelling. {\it (iv)} If $\lambda=\lambda^{*}$, then the
fixed point $\sqrt{2}$ is rationally indifferent and the fixed point
$0$ is repelling. {\it (v)} If $\lambda>\lambda^{*}$, then the
 fixed point $0$ is repelling.
\end{thm}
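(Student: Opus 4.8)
The plan is to reduce everything to the value of the multiplier $\zeta_\lambda'(x)$ at each fixed point, since a real fixed point $p$ is attracting, repelling, or indifferent according as $|\zeta_\lambda'(p)|$ is less than, greater than, or equal to $1$. First I would differentiate $\zeta_\lambda(x)=\lambda\frac{x e^{-x}}{x+1}$ to obtain
\[
\zeta_\lambda'(x)=-\lambda\,\frac{e^{-x}\,(x^2+x-1)}{(x+1)^2}.
\]
Evaluating at the fixed point $0$ gives $\zeta_\lambda'(0)=\lambda$ directly, which already settles the behaviour of $0$ in every case: attracting when $0<\lambda<1$, repelling when $\lambda>1$, and, since the multiplier equals $+1$ (a root of unity), rationally indifferent when $\lambda=1$. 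This disposes of the assertions concerning $0$ in parts (i)--(v) at once.

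Second, for a nonzero real fixed point $p$ I would exploit the defining relation $\zeta_\lambda(p)=p$, which for $p\neq 0$ is equivalent to $\lambda e^{-p}=p+1$. Substituting $\lambda e^{-p}=p+1$ into the derivative eliminates the explicit $\lambda$-dependence and yields the clean formula
\[
\zeta_\lambda'(p)=-\frac{p^2+p-1}{p+1},
\]
so that the multiplier at a nonzero fixed point depends only on its location $p$. This is the key simplification that makes the classification transparent.

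Third, I would carry out a short sign analysis of $m(p):=-\frac{p^2+p-1}{p+1}$ on the intervals supplied by the location result preceding the theorem. On $(-1,0)$ one has $p+1>0$, and the inequality $m(p)>1$ reduces to $p(p+2)<0$, which holds throughout $(-1,0)$; hence the fixed point $r_\lambda\in(-1,0)$ satisfies $\zeta_\lambda'(r_\lambda)>1$ and is repelling, giving part (i). On $(0,\sqrt{2})$ the two inequalities $m(p)<1$ and $m(p)>-1$ reduce respectively to $p(p+2)>0$ and $p^2<2$, both of which hold; hence $|\zeta_\lambda'(a_\lambda)|<1$ for $a_\lambda\in(0,\sqrt{2})$, so this fixed point is attracting, giving part (iii). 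Finally, at $p=\sqrt{2}$ one computes $m(\sqrt{2})=-1$; since the fixed-point relation forces $\lambda=(\sqrt{2}+1)e^{\sqrt{2}}=\lambda^{*}$ here, and the multiplier $-1$ is a root of unity, the fixed point $\sqrt{2}$ is rationally indifferent, giving part (iv).

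The computations are elementary, so there is no deep obstacle; the points requiring care are (a) using the fixed-point relation correctly to eliminate $\lambda$ and obtain the location-only formula for the multiplier, and (b) the two boundary cases $\lambda=1$ and $\lambda=\lambda^{*}$, where one must recognize that the multipliers $+1$ and $-1$ are roots of unity in order to conclude rational (parabolic) indifference rather than merely $|\zeta_\lambda'|=1$.
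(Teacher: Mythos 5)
Your proposal is correct and follows essentially the same route as the paper: compute $\zeta_\lambda'$, use the fixed-point relation $(p+1)e^{p}=\lambda$ to eliminate $\lambda$ from the multiplier at a nonzero fixed point, and classify by a sign analysis of the resulting expression on $(-1,0)$, $(0,\sqrt{2})$, and at $\sqrt{2}$, with $\zeta_\lambda'(0)=\lambda$ settling the origin in all cases. Your retention of the signed multiplier (so that $m(\sqrt{2})=-1$ is visibly a root of unity) is in fact slightly more careful than the paper's appeal to $|\zeta_\lambda'(\sqrt{2})|=1$ alone when concluding rational indifference.
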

\noindent {\it Proof.} Since $\zeta^{\prime}_{\lambda}(x)=-\lambda
\frac{x^2 + x -1}{(x+1)^{2}} e^{-x}$ and the non-zero fixed points
of $\zeta_{\lambda}(x)$ are solutions of the equation $(x+1)
e^{x}=\lambda$, the multiplier $\zeta^{\prime}_{\lambda}(x_{f})$ of
the fixed point $x_{f}$ is given by
\begin{equation}
|\zeta^{\prime}_{\lambda}(x_{f})|= \frac{|x_{f}^{2}+ x_{f} -1|}{(x_{f}+1)}
\label{df22}
\end{equation}
\par
Let $p(x) =|x^2+x-1| - (x+1)$. It is easily seen that $p(x)$ is a
continuous function and $p(x)<0$ for $x\in (0, \sqrt{2})$, $p(x)>0$ for $x
\in (-1, 0)\cup (\sqrt{2}, \infty)$. Therefore, from Equation~(\ref{df22}), it follows that
\begin{align}
|\zeta^{\prime}_{\lambda}(x_{f})|< 1&\quad\text{for}\;\; x_{f}\in (0,
\sqrt{2}) \label{fd11} \\
|\zeta^{\prime}_{\lambda}(x_{f})|= 1&\quad\text{for}\;\; x_{f}=\sqrt{2} \label{fd12} \\
|\zeta^{\prime}_{\lambda}(x_{f})|> 1&\quad\text{for}\;\; x_{f}\in (-1, 0)
\cup \left(\sqrt{2},\infty\right).  \label{fd13}
\end{align}

Using the Inequalities (\ref{fd11})-(\ref{fd13}) of the derivative of
the function $\zeta_{\lambda}(x)$ at its nonzero fixed points, the behaviour
of fixed points of $\zeta_{\lambda}(x)$, for various values of
$\lambda$ is as follows:
\begin{enumerate}[(i)]
\item
For fixed point $r_{\lambda} \in (-1, 0)$, by Inequality
(\ref{fd13}), $|\zeta^{\prime}_{\lambda}(r_{\lambda})|> 1$.
Therefore, $r_{\lambda}$ is a repelling fixed point of
$\zeta_{\lambda}(x)$. Further, since, $0 <
\zeta^{\prime}_{\lambda}(0)=\lambda < 1$, the point $0$ is an
attracting fixed point of $\zeta_{\lambda}(x)$ for $0<\lambda<1$.
\item
Since, $\zeta^{\prime}_{\lambda}(0) =1$, the point $0$ is a
rationally indifferent fixed point of $\zeta_{\lambda}(x)$ for
$\lambda =1$.

\item
For fixed point $a_{\lambda} \in (0, \sqrt{2})$, by Inequality
(\ref{fd11}), $|\zeta^{\prime}_{\lambda}(a_{\lambda})| < 1 $ so that
 $a_{\lambda}$ is an attracting fixed point of $\zeta_{\lambda}(x)$.
Further, since $\zeta^{\prime}_{\lambda}(0)=\lambda > 1$, the point
$0$ is a repelling fixed point of $\zeta_{\lambda}(x)$ for
$1<\lambda<\lambda^{*}$.

\item
From Equation (\ref{fd12}), we get
$|\zeta^{\prime}_{\lambda}(\sqrt{2})| =1$. Therefore, $\sqrt{2}$ is
a rationally indifferent fixed point of $\zeta_{\lambda}(x)$.
Further, since $|\zeta^{\prime}_{\lambda}(0)|=\lambda^{*} > 1$, the
point $x=0$ is a repelling fixed point of $\zeta_{\lambda}(x)$ for
$\lambda =\lambda^{*}$.

\item Since, $|\zeta^{\prime}_{\lambda}(0)|=\lambda > \lambda^{*} > 1$, it
gives that $0$ is a repelling fixed point of $\zeta_{\lambda}(x)$ for
$\lambda>\lambda^{*}$. \hfill $\square$
\end{enumerate}

\begin{rem} For $\lambda>\lambda^{*}$, the
periodic cycle of the function
$\zeta_{\lambda}(x)$ of period greater than or equal to 2 may be
attracting, repelling or indifferent. For $\lambda = 12 >\lambda^{*}$, for instance, it
is found that $\zeta_{\lambda}(x)$ has 2-cycle periodic points
$p_{11}\approx 0.748218$ and $p_{12}\approx 2.43034$, so that
$\zeta^{\prime}_{\lambda}(p_{11}) = -0.57235$,
$\zeta^{\prime}_{\lambda}(p_{12}) = -0.65847$. It follows that
$|\zeta^{\prime}_{\lambda}(p_{11})
\zeta^{\prime}_{\lambda}(p_{12}) | = 0.376878 < 1$ for $\lambda =
12$. Consequently, the periodic 2-cycle of $\zeta_{12}(x)$ is
attracting. Similarly, for $\lambda=18.5>\lambda^{*}$,
$\zeta_{\lambda}(x)$ has 2-cycle periodic points $q_{11}\approx
0.408442$ and $q_{12}\approx 3.56598$, so that
$\zeta^{\prime}_{\lambda}(q_{11}) = 2.63285$,
$\zeta^{\prime}_{\lambda}(q_{12}) = -0.383357$ and consequently,
$|\zeta^{\prime}_{\lambda}(q_{11})
\zeta^{\prime}_{\lambda}(q_{12}) | = 1.00932 > 1$ for
$\lambda=18.5$. It therefore follows that the periodic 2-cycle of
$\zeta_{18.5}(x)$ is repelling. While,  for $\lambda=18.44505
>\lambda^{*}$, $\zeta_{\lambda}(x)$ has 2-cycle periodic
points $s_{11}\approx 0.409865$ and $s_{12}\approx 3.55911$, so that
$\zeta^{\prime}_{\lambda}(s_{11}) = 2.60007$,
$\zeta^{\prime}_{\lambda}(s_{12}) = -0.384606$ and consequently,
$|\zeta^{\prime}_{\lambda}(s_{11})
\zeta^{\prime}_{\lambda}(s_{12})|=1$ for $\lambda=18.44505$.
Therefore, the periodic 2-cycle of $\zeta_{18.44505}(x)$ is
indifferent. Thus, for $\lambda>\lambda^{*}$, the periodic cycle of
period greater than or equal to 2 may be attracting, repelling or
indifferent.
\end{rem}

\section{Bifurcation in Dynamics of $\zeta_{\lambda}\in {\cal M}$ }     \label{bifur3} In this
section, the bifurcation in the dynamics of the functions $\zeta_{\lambda}(x)$,
$x \in {\mathbb{R}}\backslash\{-1\}$, is investigated. The following theorem gives that the bifurcations in the dynamics of
$\zeta_{\lambda}(x)$, $x \in {\mathbb{R}}\backslash T_{p}$, occur at
$\lambda  = 1$ and $\lambda = \lambda^{*}$, where $T_{p}$ is the set
of the points that are in the backward orbits of the pole $-1$ of
$\zeta_{\lambda}(x)$: 

\begin{thm} \label{thm33}
Let $\zeta_{\lambda} \in {\cal M}$. Then, {\it (a)} If $0<\lambda<
1, \; \zeta^{n}_{\lambda}(x)\rightarrow 0 \; {\mbox{as}} \;
n\rightarrow \infty$  for $x \in [(-\infty, -1) \cup (-1,
r_{\lambda})\cup (r_{\lambda}, \infty)]\backslash T_{p}$. {\it (b)}
If $\lambda=1, \; \zeta^{n}_{\lambda}(x)\rightarrow 0 \; {\mbox{as}}
\; n\rightarrow \infty$  for $x\in [(-\infty, -1) \cup (-1,
\infty)]\backslash T_{p}$. {\it (c)}  If $1<\lambda< \lambda^{*}, \;
\zeta^{n}_{\lambda}(x)\rightarrow a_{\lambda} \; {\mbox{as}} \;
n\rightarrow \infty$  for $x \in [(-\infty, -1) \cup (-1, 0) \cup
(0, \infty)]\backslash T_{p}$. {\it (d)} If $\lambda=\lambda^{*}, \;
\zeta^{n}_{\lambda}(x)\rightarrow \sqrt{2} \; {\mbox{as}} \;
n\rightarrow \infty$  for $x\in [(-\infty, -1) \cup (-1, 0) \cup (0,
\infty)]\backslash T_{p}$. {\it (e)}  If $\lambda > \lambda^{*}$,
the orbits $\{\zeta^{n}_{\lambda}(x)\}$ repel for all $x \in {\mathbb{R}}\backslash T_{p}.$
\end{thm}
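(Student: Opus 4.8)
The plan is to reduce the global statement to a one--dimensional cobweb analysis of the real map, anchored by the local classification of fixed points already obtained in Theorem~\ref{thm102} and globalised through the singular--value structure together with the absence of wandering and Baker domains (Theorems~\ref{thm1114} and~\ref{thm1115}). First I would record the qualitative shape of the real graph of $\zeta_\lambda$. Since $\zeta'_\lambda(x)=-\lambda\frac{x^2+x-1}{(x+1)^2}e^{-x}$, the sign of the derivative is governed by $-(x^2+x-1)$, so on each side of the pole $x=-1$ the map is monotone except for turning points at the critical points $\frac{-1\pm\sqrt5}{2}$; moreover $\zeta_\lambda(x)<0$ on $(-1,0)$ while $\zeta_\lambda(x)>0$ on $(0,\infty)$ and on $(-\infty,-1)$, with $\zeta_\lambda(x)\to 0$ as $x\to+\infty$, $\zeta_\lambda(x)\to+\infty$ as $x\to-\infty$ and as $x\to-1^-$, and $\zeta_\lambda(x)\to-\infty$ as $x\to-1^+$. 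These facts pin down an immediate invariant interval about the relevant non-repelling fixed point.

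For parts (a)--(d) I would argue as follows. In each case Theorem~\ref{thm102} supplies a unique attracting, respectively rationally indifferent for $\lambda=1$ and $\lambda=\lambda^{*}$, fixed point, namely $0$ in (a),(b) and $a_\lambda\in(0,\sqrt2)$, $\sqrt2$ in (c),(d). Using the monotonicity pieces above, I would exhibit a trapping interval containing this fixed point and no other fixed point, on which the cobweb iteration converges monotonically (or, in the parabolic cases (b) and (d), along a Leau--Fatou petal, so that convergence is one-sided and algebraic rather than geometric). I would then show every remaining real interval is carried, after finitely many steps and while avoiding the pole, into this trapping interval: points of $(0,\infty)$ are bounded by the decay of $e^{-x}$ and pushed toward the fixed point, points of $(-1,0)$ map into a range controlled by the sign analysis, and points of $(-\infty,-1)$ map into $(0,\infty)$. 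The exclusion of $T_{p}$, the backward orbit of the pole, is precisely what guarantees that the forward orbit never meets $-1$ and hence is well defined for all $n$.

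To turn these orbit estimates into the clean global conclusion I would invoke the complex dynamics. The singular set of $\zeta_\lambda$ consists of the asymptotic value $0$ and the two critical values; in cases (a)--(d) the asymptotic value $0$ is the fixed point itself or lands in its basin, and I would check that both critical values are likewise attracted to the non-repelling fixed point. Since $\zeta_\lambda$ is critically finite, Theorems~\ref{thm1114} and~\ref{thm1115} exclude wandering and Baker domains, and rotation domains are impossible because each would require infinitely many singular values accumulating on its boundary. Hence every Fatou component is eventually mapped into the basin of the unique non-repelling cycle, so the Fatou set is precisely that basin, and every real point off $T_{p}$ (and off the excluded repelling fixed point) must lie in the Fatou set and converge as claimed. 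For part (e), where Theorem~\ref{thm102} makes both real fixed points repelling, the same singular--value bookkeeping shows no fixed point can capture a singular value, so no real orbit off $T_{p}$ can converge to a fixed point; the orbits are repelled from $0$ and from the fixed point in $(\sqrt2,\infty)$, with the attracting periodic cycles noted in the Remark forming the exceptional set analysed later.

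The main obstacle I anticipate is the cobweb control in the non-monotone, pole-bearing regions rather than the complex-analytic globalisation. Because $\zeta_\lambda$ has two critical points and a pole at $-1$, a real orbit can in principle jump across $0$, approach the pole, or oscillate, and ruling this out requires carefully combining the sign pattern with the uniform bound that the decay of $e^{-x}$ imposes on the positive branch, so as to show that after a single step every orbit off $T_{p}$ enters a bounded region where the monotone contraction (or petal) analysis applies. The parabolic endpoints $\lambda=1$ and $\lambda=\lambda^{*}$ are the most delicate, since there the convergence to $0$ and to $\sqrt2$ is only algebraic and one-sided, so the trapping interval and the direction of approach must be chosen in agreement with the attracting petal predicted by the Leau--Fatou theory.
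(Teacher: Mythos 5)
Your first two paragraphs are essentially the paper's own proof: the paper introduces $t_{\lambda}(x)=\zeta_{\lambda}(x)-x$, determines its sign on each interval, deduces that the orbit on $(0,\infty)$ (resp.\ $(r_{\lambda},0)$) is monotone and bounded, hence converges to the unique fixed point there, and then transports the conclusion to $(-\infty,-1)$ and to $(-1,\cdot)\backslash T_{p}$ via the mapping properties $(-\infty,-1)\to(0,\infty)$ and $(-1,\cdot)\backslash T_{p}\to(-\infty,-1)$; for cases (c) and (d) it adds a Mean Value Theorem contraction estimate on $(\frac{-1+\sqrt{5}}{2},a_{\lambda})$ using the sign of $\zeta''_{\lambda}$. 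If you carry out that cobweb analysis in full, the theorem is already proved --- it is a statement purely about real orbits and needs no complex input. (Your treatment of (e) is about as loose as the paper's own, which merely asserts "similar arguments"; both should really defer to the Remark that follows, since attracting $2$-cycles do exist for some $\lambda>\lambda^{*}$ and capture open sets of real points.)

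The genuine problem is your third paragraph, which you present as the step that "turns the orbit estimates into the clean global conclusion." First, it does not deliver the conclusion: classifying the Fatou components and showing $F(\zeta_{\lambda})=A(0)$ (or $A(a_{\lambda})$) tells you that every Fatou point converges to the fixed point, but it does not tell you that a given real point off $T_{p}$ lies in the Fatou set rather than in the Julia set --- and indeed for $\lambda>\lambda^{*}$ the paper proves (Theorem~\ref{thmee}) that most of the real line \emph{is} in the Julia set. Membership of $\mathbb{R}\backslash(T_{p}\cup\{r_{\lambda}\})$ in the basin has to come from the real orbit analysis itself; it cannot be extracted from the component classification. Second, the argument is circular relative to the paper's architecture: the proof that all singular orbits land in the basin (the key hypothesis you need to rule out rotation domains and to pin down the Fatou set in Theorems~\ref{thmaa} and~\ref{thmcc}) is itself an application of Theorem~\ref{thm33}(a) and (c), since the critical values are real. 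You acknowledge you would "check that both critical values are likewise attracted," but that check \emph{is} an instance of the real convergence statement being proved, so the complex detour buys nothing and must be dropped in favour of completing the elementary cobweb argument.
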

\noindent {\it Proof.} Let $t_{\lambda}(x)=\zeta_{\lambda}(x)-x$ for
$ x \in {\mathbb{R}} \backslash\{-1\}$. It is easily seen that
$t_{\lambda}(x)$ is continuously differentiable for  $ x\in
{\mathbb{R}}\backslash \{-1\}$. Observe that the fixed points of
$\zeta_{\lambda}(x)$ are zeros of $t_{\lambda}(x)$. Using the
function $t_{\lambda}(x)$, the dynamics of $\zeta_{\lambda}(x)$, for
various values of $\lambda$, is now described as follows:
\begin{enumerate}
\item[(a)] If $0<\lambda<1$, it follows by Theorem~\ref{thm102} that
$\zeta_{\lambda}(x)$ has an attracting fixed point $0$ and a repelling
fixed point $r_{\lambda}$. Since $t^{\prime}_{\lambda}(r_{\lambda})>0$
and in a neighbourhood of $r_{\lambda}$ the function $t^{\prime}_{\lambda}(x)$
is continuous, $t^{\prime}_{\lambda}(x) > 0$
in some neighbourhood of $r_{\lambda}$. Therefore, $t_{\lambda}(x)$ is
increasing in a neighbourhood of $r_{\lambda}$. By the continuity of
$t_{\lambda}(x)$, for sufficiently small $\delta_{1} >0$,
$t_{\lambda}(x)> 0$ in $(r_{\lambda}, r_{\lambda} + \delta_{1})$ and
$t_{\lambda}(x) < 0$ in $(r_{\lambda}-\delta_{1}, r_{\lambda})$.
Further, since $t^{\prime}_{\lambda}(0)<0$ and $t^{\prime}_{\lambda}(x)$
is continuous in a neighbourhood of $0$, $t^{\prime}_{\lambda}(x) < 0$
in some neighbourhood of $0$, $t_{\lambda}(x)$ is decreasing in
a neighbourhood of $0$. By the continuity of $t_{\lambda}(x)$, for
sufficiently small $\delta_{2} >0$, $t_{\lambda}(x) > 0$ in
$(-\delta_{2}, 0)$ and $t_{\lambda}(x) < 0$ in $(0, \delta_{2})$.
Since $t_{\lambda}(x) \neq 0$ in $(r_{\lambda}, 0)$,
\begin{equation}
t_{\lambda}(x) = \zeta_{\lambda}(x)-x \left\{ \begin{array}{lcl}
>0 & & \mbox{for} \; x \in (r_{\lambda}, 0) \\
<0 & & \mbox{for} \; x \in (-1, r_{\lambda}) \cup (0, \infty).
\end{array} \right.
\label{txx41}
\end{equation}

Therefore, for $0<\lambda<1$, the dynamics of $\zeta_{\lambda}(x)$ is as
follows: (i) $\,$ By~(\ref{txx41}), it follows that, for
$x\in (0,\infty)$, $\zeta_{\lambda}(x) <x$. Since
$\zeta_{\lambda}(x) > 0$ for $ x\in (0, \infty)$, the sequence $\{\zeta^{n}_{\lambda}(x)\}$
is decreasing and bounded below by $0$. Hence
$\zeta^{n}_{\lambda}(x) \rightarrow 0$ as $n \rightarrow \infty$ for
$x\in (0, \infty)$. (ii) $\,$ By~(\ref{txx41}), it follows that, for
$x\in (r_{\lambda}, 0)$, $\zeta_{\lambda}(x) > x$. Since
$\zeta_{\lambda}(x)$ is negative and increasing for $x\in
(r_{\lambda}, 0)$, the sequence $\{\zeta^{n}_{\lambda}(x)\}$ is
increasing and bounded above by $0$. Hence $\zeta^{n}_{\lambda}(x)
\rightarrow 0$ as $n \rightarrow \infty$ for $x\in (r_{\lambda},
0)$. (iii) Since $\zeta_{\lambda}(x)$ maps the interval $(-\infty,
-1)$  into $(0, \infty)$, by Case(i), we get $\zeta^{n}_{\lambda}(x)
\rightarrow 0$ as $n \rightarrow \infty$ for $x\in (-\infty, -1)$.
(iv) The forward orbit of $\zeta_{\lambda}(x)$ for each point in
$(-1, r_{\lambda}) \backslash T_{p}$ is contained in $(-\infty,
-1)$. Thus, by Case(iii), $\zeta^{n}_{\lambda}(x) \rightarrow 0$ as
$n \rightarrow \infty$ for $x\in (-1, r_{\lambda})\backslash T_{p}$.
Figs.~\ref{webdiagram1}(i) and (ii) show the web diagrams of
dynamics of $\zeta_{\lambda}(x)$, $0 <\lambda <1$.
\begin{figure}[h]
\begin{minipage}{0.46\textwidth}
\includegraphics[angle=-90, scale=0.38]{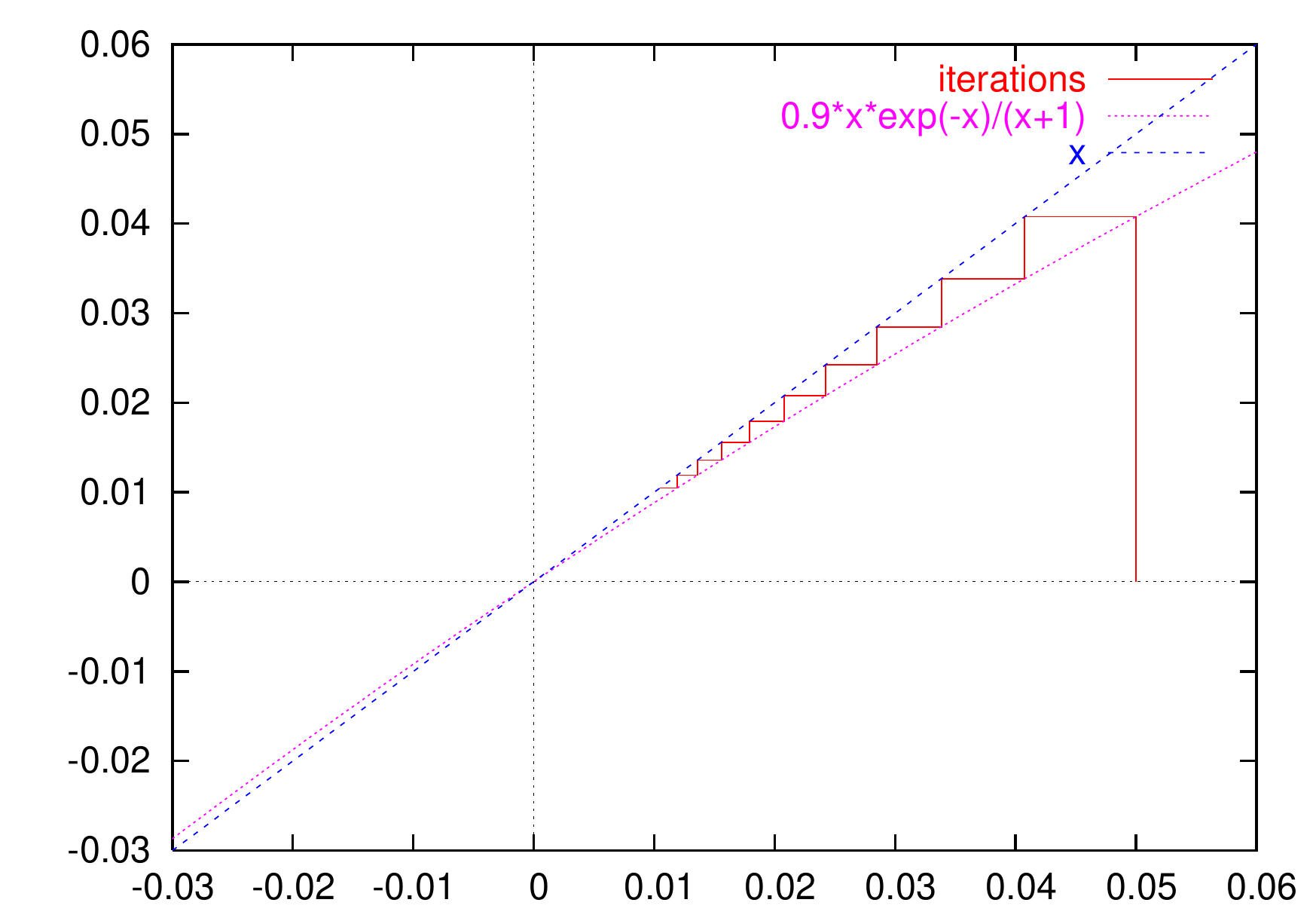}\\\\
\centering{(i) $n=10, \; x_{0}=0.05$}
\end{minipage} \hfill
\begin{minipage}{0.46\textwidth}
\includegraphics[angle=-90, scale=0.38]{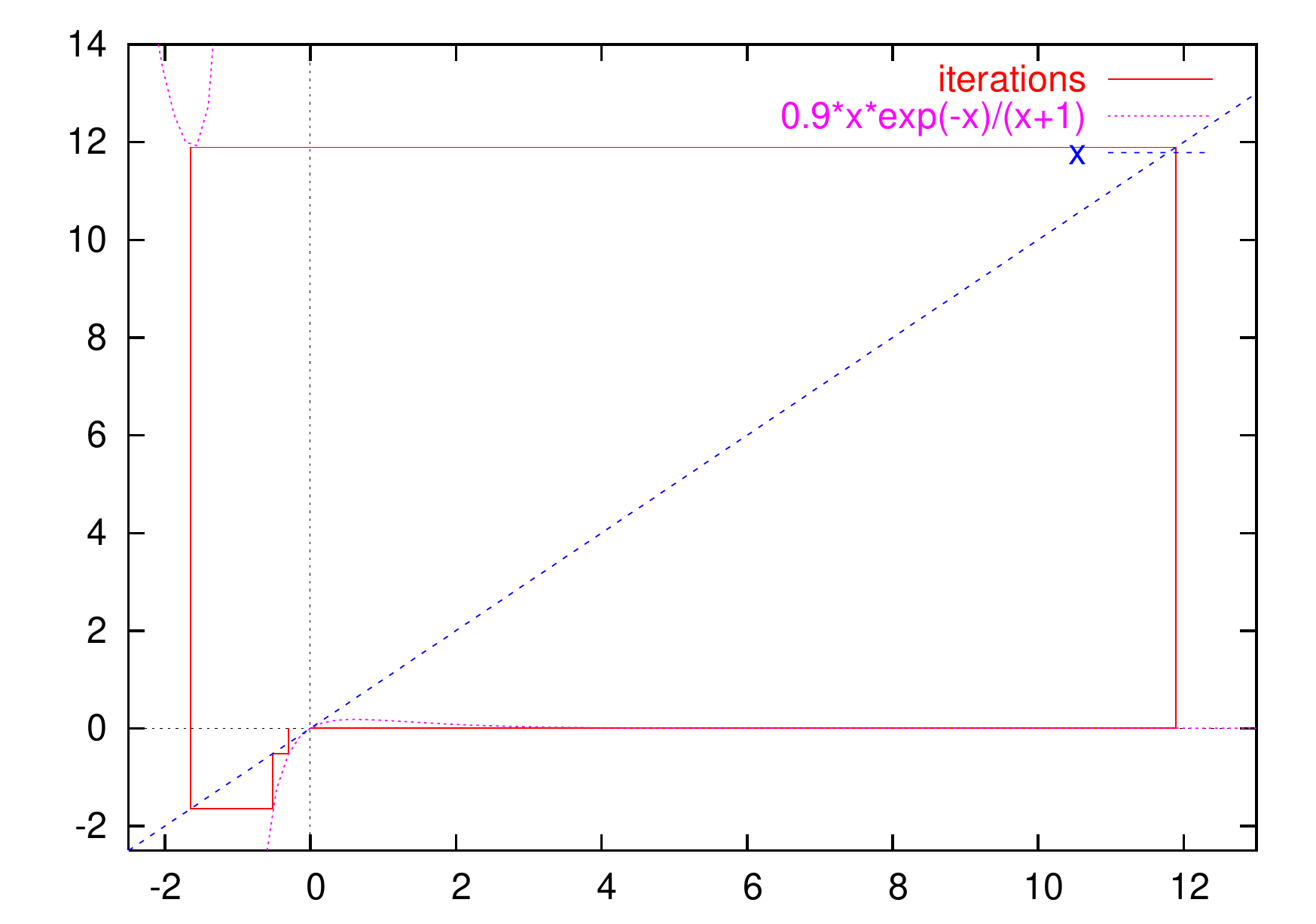}\\\\
\centering{(ii) $n=10, \; x_{0}=-0.3 $}
\end{minipage}
\caption{Web diagrams of $\zeta_{\lambda}(x)$ for $\lambda=0.9$} \label{webdiagram1}
\end{figure}
\item[(b)] If $\lambda=1$, by Theorem~\ref{thm102}, $\zeta_{\lambda}(x)$
has a rationally indifferent fixed point $0$. This implies
$t^{\prime}_{\lambda}(0) = 0$ and $t^{\prime\prime}_{\lambda}(0) < 0$,
so that $t_{\lambda}(x)$ has maxima at $0$. But $t_{\lambda}(0) = 0$
therefore $t_{\lambda}(x) <0$ in a neighbourhood of $0$. By continuity
of $t_{\lambda}(x)$, for $\delta >0$, $t_{\lambda}(x) < 0$ in
$(-\delta, 0) \cup (0, \delta)$. Since $t_{\lambda}(x) \neq 0$ in
$(-1, 0) \cup (0, \infty)$, it now follows that
\begin{equation}
t_{\lambda}(x) = \zeta_{\lambda}(x)-x <0  \;\; \mbox{for} \; x \in (-1, 0) \cup (0, \infty).
\label{txx42}
\end{equation}
Therefore, for $\lambda=1$, the dynamics of  $\zeta_{\lambda}(x)$ is found to be as follows: (i) $\,$ By~(\ref{txx42}), it
follows that, for $x\in (0, \infty)$, $\zeta_{\lambda}(x) < x$.
Since $\zeta_{\lambda}(x)>0$ for $ x\in (0, \infty)$, the sequence
$\{\zeta^{n}_{\lambda}(x)\}$ is decreasing and bounded below by $0$.
Hence $\zeta^{n}_{\lambda}(x) \rightarrow 0$ as $n \rightarrow
\infty$ for $x\in (0, \infty)$. (ii) By arguments similar to those
used in the proof for Case(iii) of (a), it follows that
$\zeta^{n}_{\lambda}(x) \rightarrow 0$ as $n \rightarrow \infty$ for
$x\in (-\infty, -1)$. (iii) The forward orbit of
$\zeta_{\lambda}(x)$ for each point in $(-1, 0) \backslash T_{p} $
is contained in the interval $(-\infty, -1)$. Therefore, by
Case(ii), $\zeta^{n}_{\lambda}(x) \rightarrow 0$ as $n \rightarrow
\infty$. The web diagrams of dynamics of $\zeta_{\lambda}(x)$,
$\lambda = 1$, are given in Figs.~\ref{webdiagram2}(i) and (ii).
\begin{figure}[h]
\begin{minipage}{0.46\textwidth}
\includegraphics[angle=-90, scale=0.38]{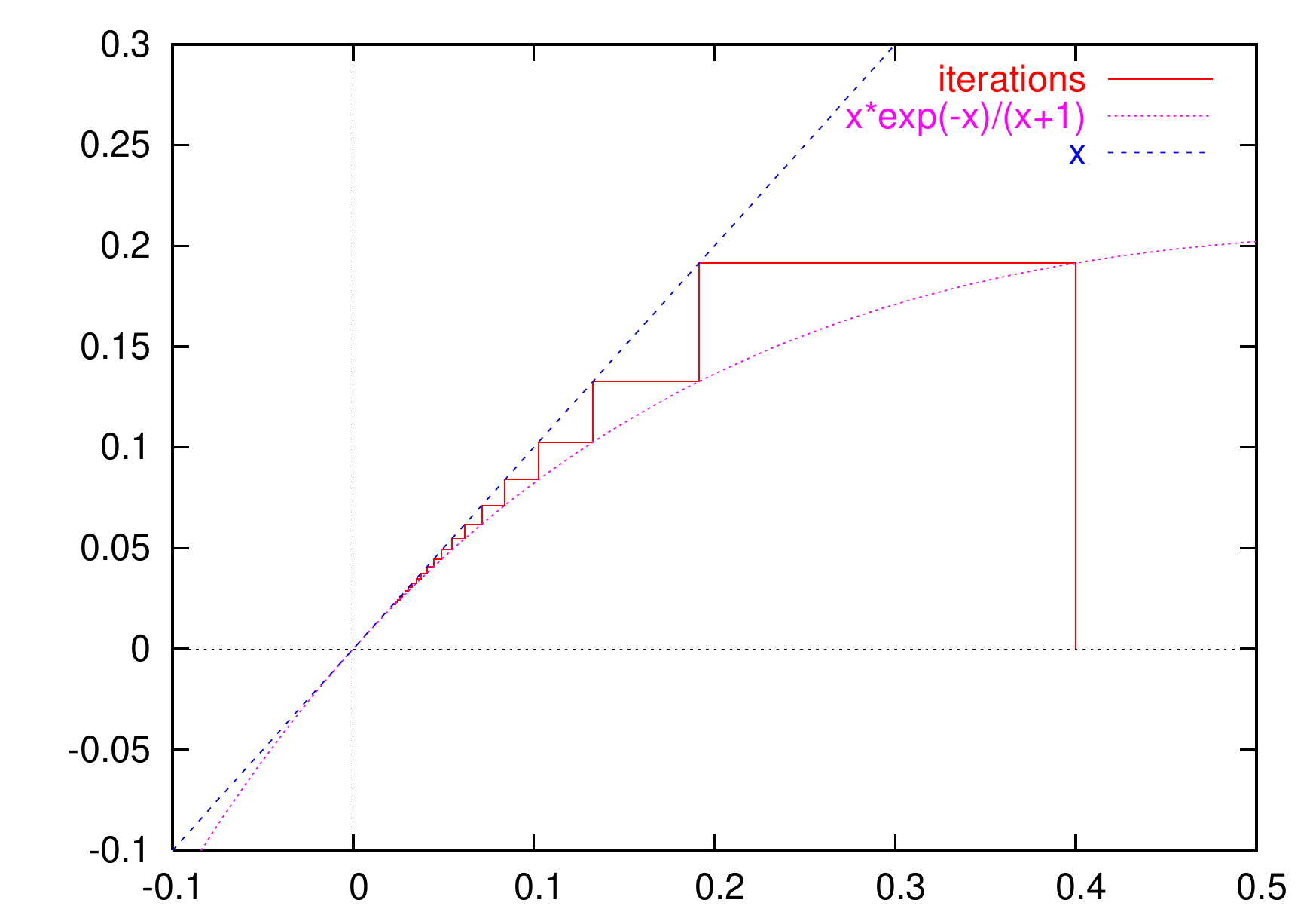}\\\\
\centering{(i) $n=20, \; x_{0}=0.4$}
\end{minipage}  \hfill
\begin{minipage}{0.46\textwidth}
\includegraphics[angle=-90, scale=0.38]{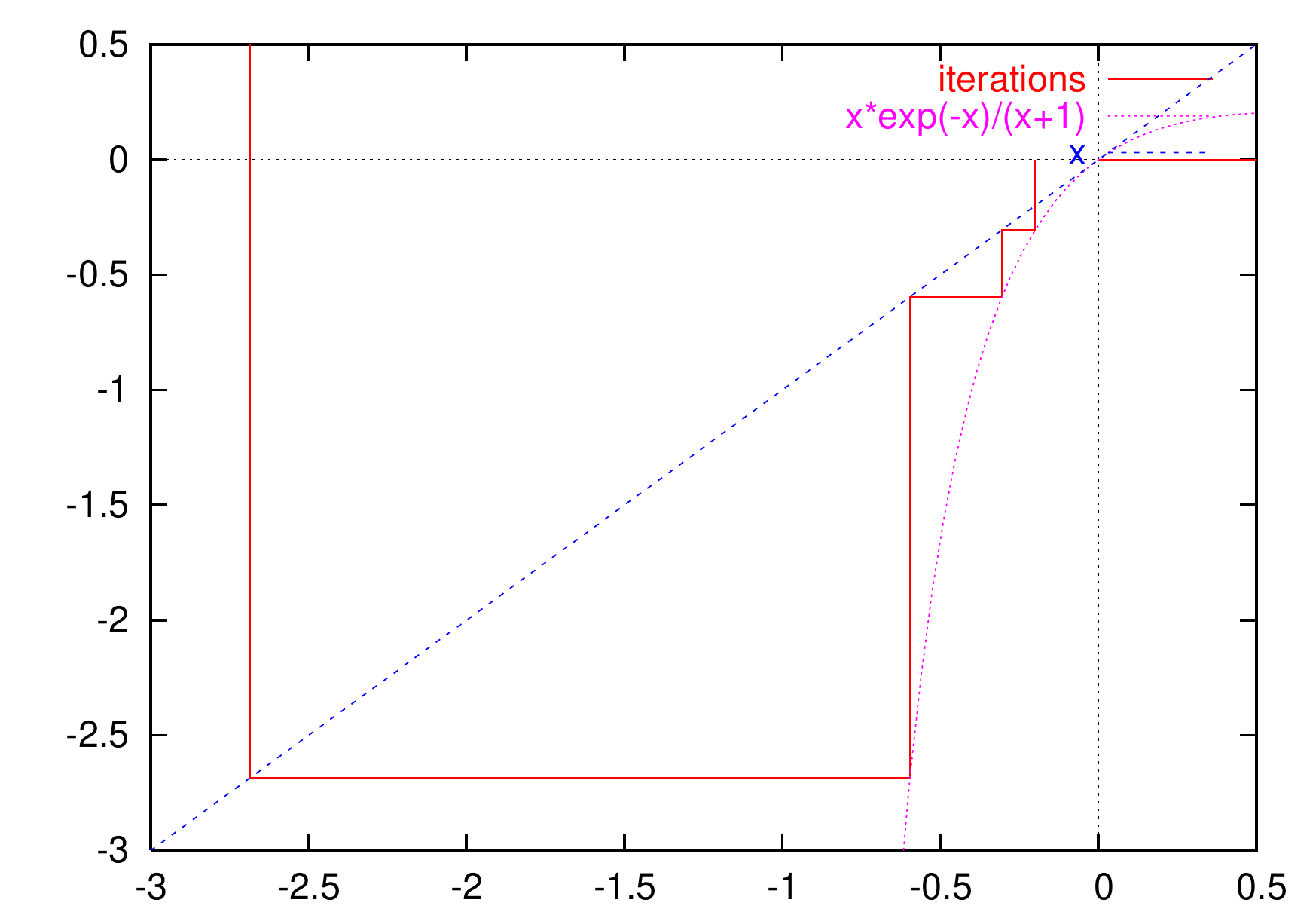}\\\\
\centering{(ii) $n=20, \; x_{0}=-0.2 $}
\end{minipage}
\caption{Web diagrams of $\zeta_{\lambda}(x)$ for $\lambda=1$} \label{webdiagram2}
\end{figure}

\item[(c)] If $1<\lambda <\lambda^{*}$, by Theorem~\ref{thm102},
$\zeta_{\lambda}(x)$ has an attracting fixed point $a_{\lambda}$ and
a repelling fixed point $0$. The dynamics of $\zeta_{\lambda}(x)$, for $1<\lambda <\lambda^{*}$,  is found to be as follows: (i) First let $x \in
(\frac{-1+\sqrt{5}}{2}, a_{\lambda})$. Since
$\zeta^{\prime\prime}_{\lambda}(x) < 0$ for $x \in (-\infty,
x_{0})$, $\zeta^{\prime}_{\lambda}(x)$ is decreasing for $x \in
(-\infty, x_{0})$, where $x_{0} (\approx 1.26953)$ is a solution of
the equation $x^3+ 2 x^2 -x -4 = 0$. Since
$\zeta^{\prime}_{\lambda}(\frac{-1+\sqrt{5}}{2})=0$ and $-1 <
\zeta^{\prime}_{\lambda}(a_{\lambda}) < 0$, by Mean Value Theorem
$|\zeta_{\lambda}(x) - a_{\lambda}| < |x - a_{\lambda}|$. Therefore,
$\zeta^{n}_{\lambda}(x) \rightarrow a_{\lambda}$ as $n \rightarrow
\infty$ for $x \in (\frac{-1+\sqrt{5}}{2}, a_{\lambda})$. Next, for
each $x\in (0, \; \frac{-1+\sqrt{5}}{2})$, there exits $n_{0} \in
\mathbb{N}$ such that $\zeta^{n_{0}}_{\lambda}(x) \; \in \;
(\frac{-1+\sqrt{5}}{2}, \; a_{\lambda})$. It follows that
$\zeta^{n}_{\lambda}(x) \rightarrow a_{\lambda}$ as $ n\rightarrow
\infty$ for $x \in (0, \; \frac{-1+\sqrt{5}}{2})$. Further, for
$(a_{\lambda}, \; \infty)$, $\zeta_{\lambda}(x)$ maps $(a_{\lambda},
\; \infty)$ into $(0, \; \frac{-1+\sqrt{5}}{2})$,
$\zeta^{n}_{\lambda}(x) \rightarrow a_{\lambda}$ as $n \rightarrow
\infty$ for $x \in (a_{\lambda}, \; \infty)$. Hence
$\zeta^{n}_{\lambda}(x) \rightarrow a_{\lambda}$ as $n \rightarrow
\infty$ for $x \in (0, \; \infty)$. (ii) By arguments similar to
those used for Case(iii) of (a), it follows that
$\zeta^{n}_{\lambda}(x) \rightarrow a_{\lambda} \; {\mbox{as}} \; n
\rightarrow \infty$. (iii) The forward orbit of $\zeta_{\lambda}(x)$
for each point in $(-1, 0) \backslash T_{p}$ is contained in the
interval $(-\infty, -1)$. Therefore, by Case(ii),
$\zeta^{n}_{\lambda}(x) \rightarrow a_{\lambda}$ as $n \rightarrow
\infty$. Figs.~\ref{webdiagram3}(i) and (ii) give the web diagrams
of dynamics of $\zeta_{\lambda}(x)$ for $1 < \lambda < \lambda^{*}$.
\begin{figure}[h]
\begin{minipage}{0.48\textwidth}
\includegraphics[angle=-90,scale=0.38]{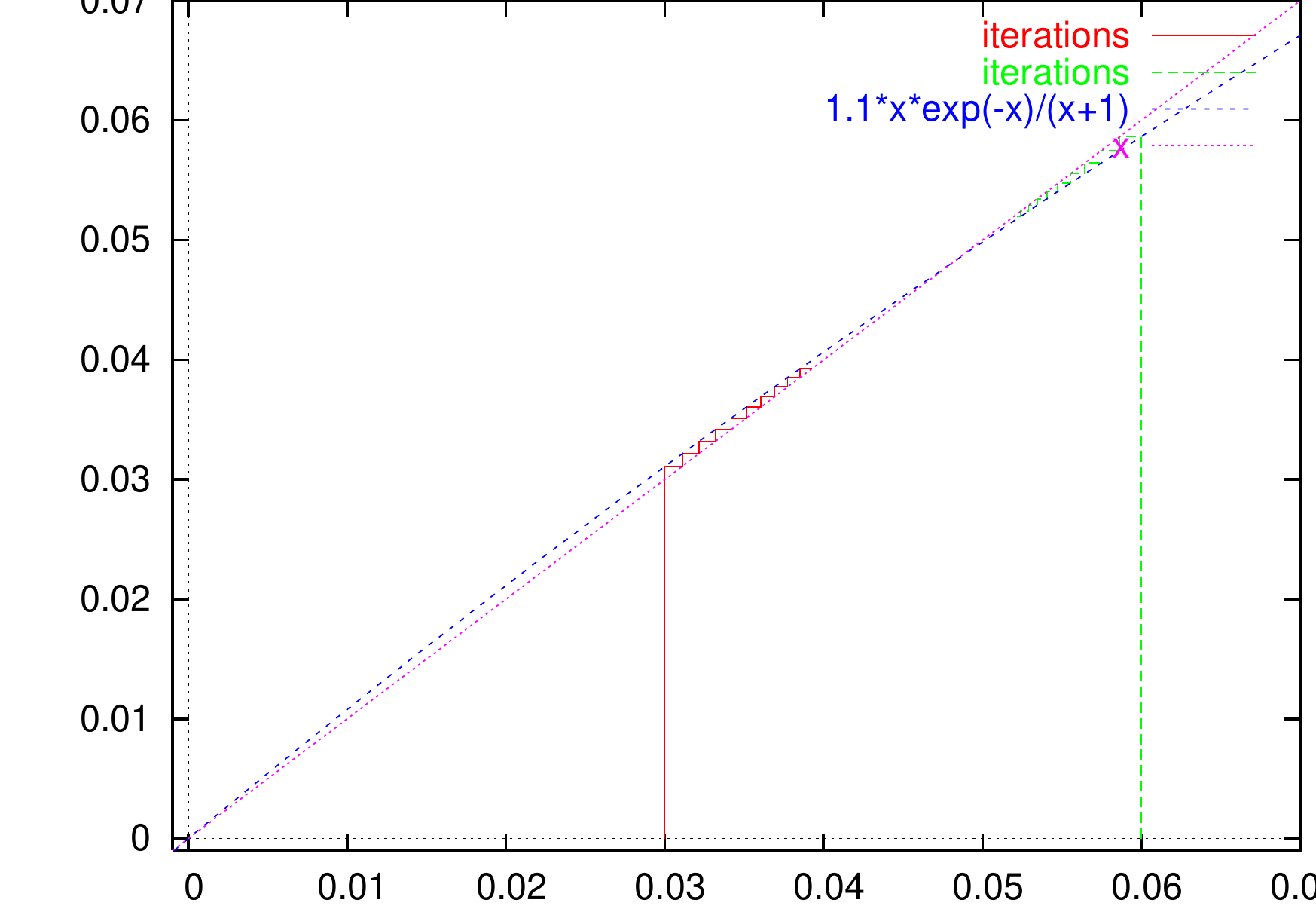}\\\\
\centering{(i) $n=10, x_{0}=0.03, 0.06$}
\end{minipage} \hfill
\begin{minipage}{0.48\textwidth}
\includegraphics[angle=-90,scale=0.38]{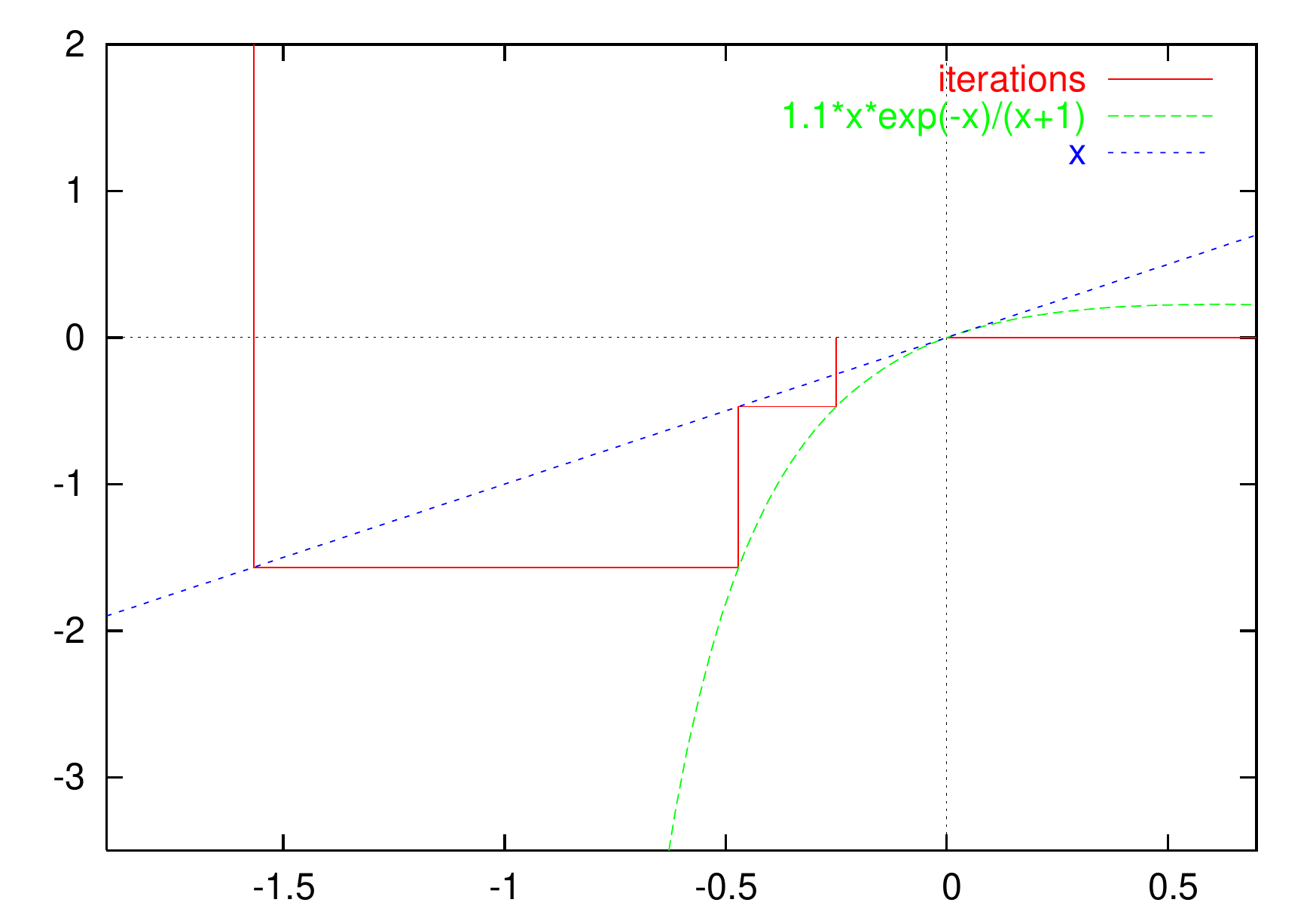}\\\\
\centering{(ii) $n=10, x_{0}=-0.25$}
\end{minipage}
\caption{Web diagrams of $\zeta_{\lambda}(x)$ for $\lambda=1.1$} \label{webdiagram3}
\end{figure}
\item[(d)] If $\lambda =\lambda^{*}$, by Theorem~\ref{thm102},
$\zeta_{\lambda}(x)$ has a rationally indifferent fixed point
$\sqrt{2}$ and a repelling fixed point $0$. The dynamics of
$\zeta_{\lambda}(x)$, for $\lambda =\lambda^{*}$, is as follows: (i) First
let $ x\in (\frac{-1+\sqrt{5}}{2}, \sqrt{2})$, since
$\zeta^{\prime\prime}_{\lambda}(x) < 0$ for $x \in (-\infty,
x_{0})$, $\zeta^{\prime}_{\lambda}(x)$ is decreasing for $x \in
(-\infty, x_{0})$, where $x_{0}(\approx 1.26953)$ is a solution of
the equation $x^3+2x^2-x-4=0$. The rest of proof of assertion is
same as Case(i) of (c). (ii) By arguments similar to those used in
the proof for Case(iii) of (a), it follows that
$\zeta^{n}_{\lambda}(x) \rightarrow \sqrt{2}$ as $n \rightarrow
\infty$. (iii) Since forward orbit of $\zeta_{\lambda}(x)$ for each
point in $(-1, 0) \backslash T_{p}$ is contained in $(-\infty, -1)$,
by Case(ii), $\zeta^{n}_{\lambda}(x) \rightarrow \sqrt{2}$ as
$n\rightarrow \infty$. The web diagram of dynamics of
$\zeta_{\lambda}(x)$, $\lambda = \lambda^{*}$, is given by Fig.~\ref{webdiagram45}(i).
\begin{figure}[h]
\begin{minipage}{0.48\textwidth}
\includegraphics[angle=-90,scale=0.38]{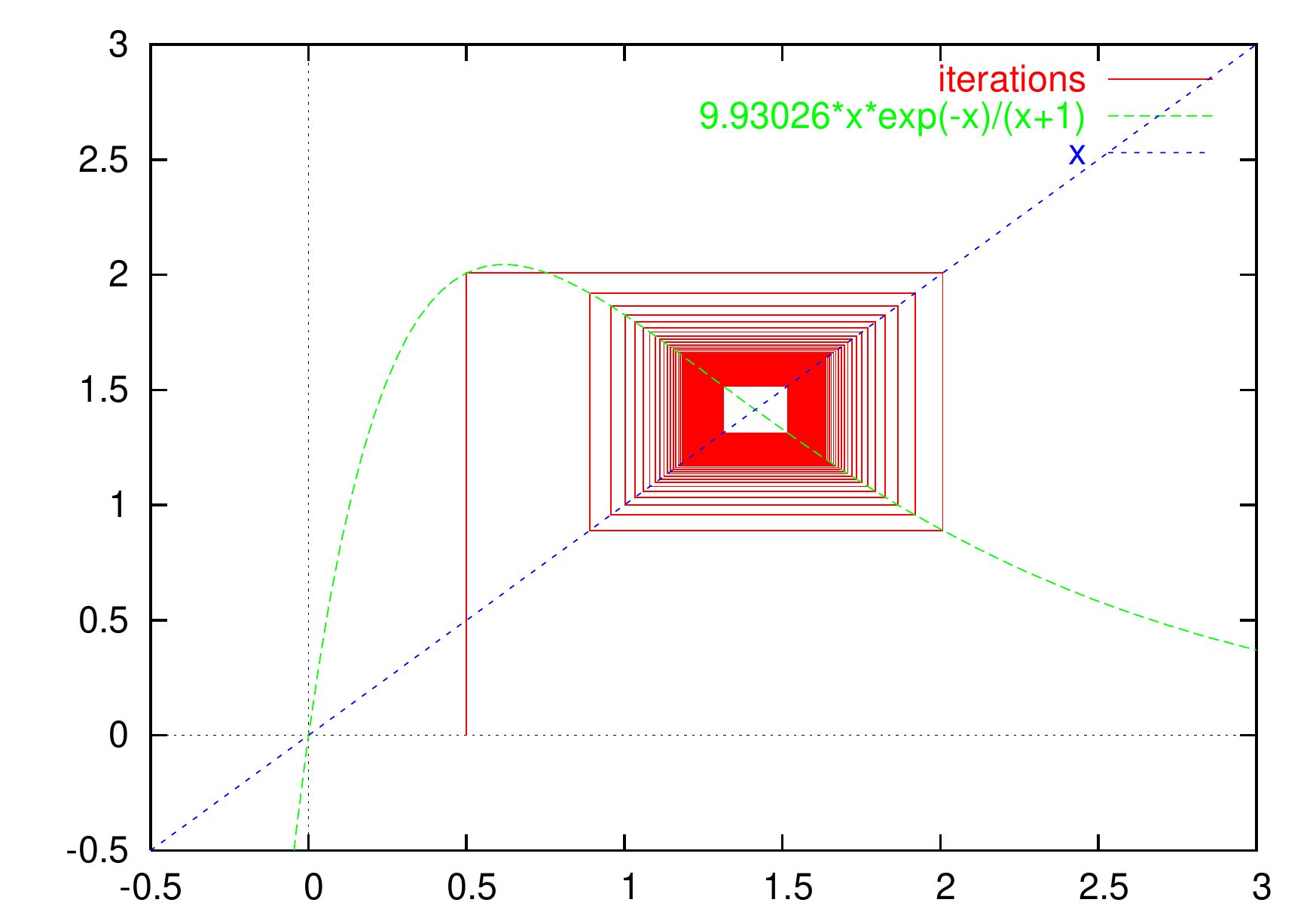}\\\\
\centering{(i) $\lambda=9.93026$, $n=200, x_{0}=0.5$}
\end{minipage} \hfill
\begin{minipage}{0.48\textwidth}
\includegraphics[angle=-90,scale=0.38]{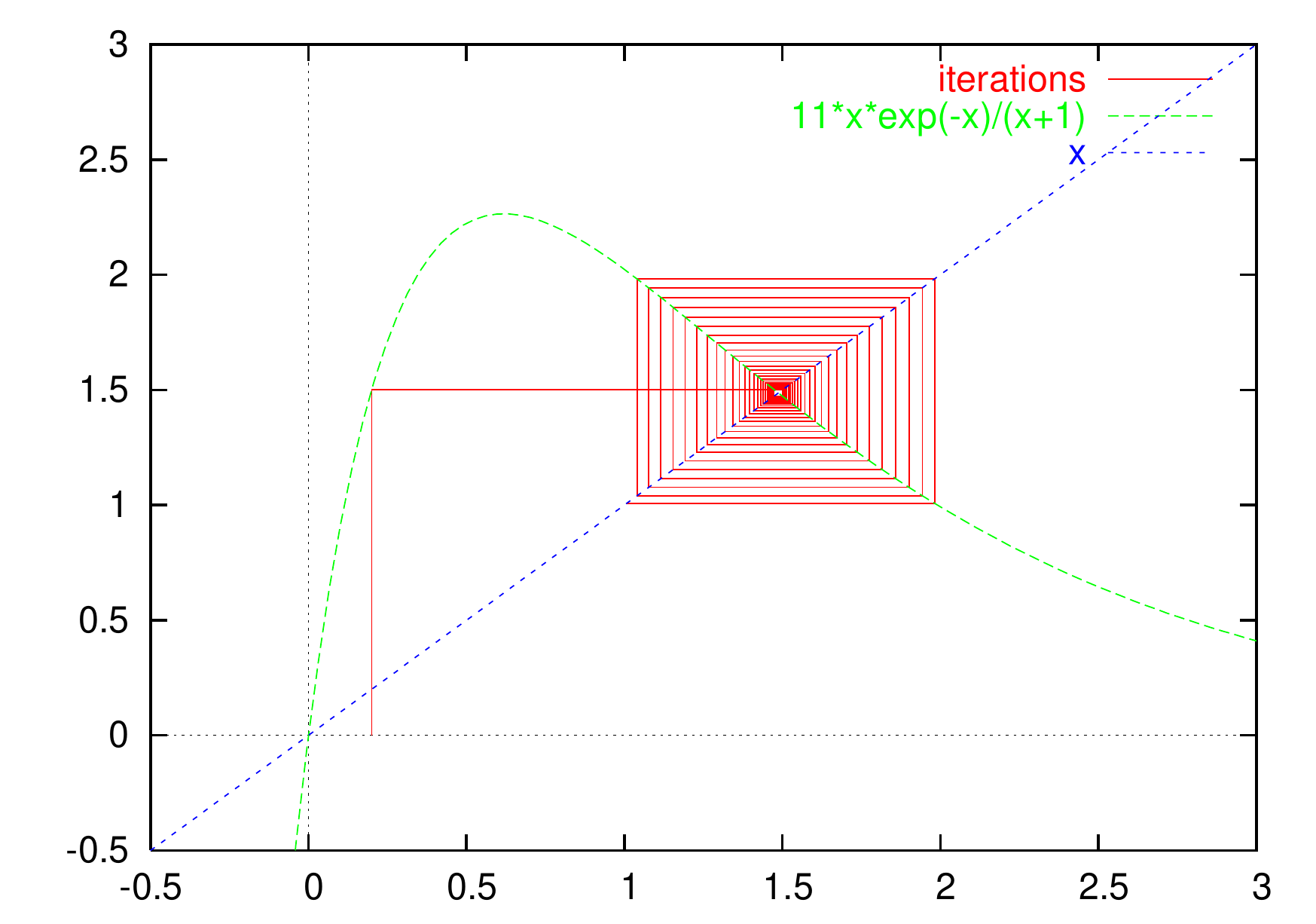}\\\\
\centering{(ii) $\lambda=11$, $n=50, x_{0}=0.2$}
\end{minipage}
\caption{Web diagram of $\zeta_{\lambda}(x)$}
\label{webdiagram45}
\end{figure}
\item[(e)] If $\lambda >\lambda^{*}$, by Theorem~\ref{thm102},
$\zeta_{\lambda}(x)$ has a repelling fixed point $0$. Using arguments similar than above cases, the orbits $\{\zeta^{n}_{\lambda}(x)\}$ repel for all $x \in {\mathbb{R}}\backslash T_{p}.$ . Fig.~\ref{webdiagram45}(ii) gives the web diagram of dynamics of $\zeta_{\lambda}(x)$ for $\lambda > \lambda^{*}$. \hfill $\square$ 	
\end{enumerate}

\begin{rem} \label{rem21}
 If $\lambda >\lambda^{*}$, a periodic
cycle of period greater than or equal to 2 may be attracting,
repelling or indifferent. The dynamics of $\zeta_{\lambda}(x)$ in
this case is now described as follows: (i) Since, for $n=2$,
$\zeta_{\lambda}(x)$ has an attracting, repelling or indifferent
periodic cycle of period 2 and 0 is a repelling fixed point, the
iterations either converges to attracting cycle or keeps moving.
Consequently, all orbits of points in $(0, \; \infty)$ will either
attract, or repel, or keep moving indefinitely. Thus, in this case,
orbits $\{\zeta^{n}_{\lambda}(x)\}$ either attract, or repel, or are
chaotic for $x\in (0, \; \infty)$. (ii) Since $\zeta_{\lambda}(x)$
maps each point in  $(-\infty, -1)$ to a point of the interval $(0,
\infty)$, by Case(i), orbits $\{\zeta^{n}_{\lambda}(x)\}$ either
attract, or repel, or are chaotic for $x \in (-\infty, -1)$. (iii)
The forward orbit of $\zeta_{\lambda}(x)$ for each point in $(-1, 0)
\backslash T_{p}$ is contained in the interval $(-\infty, -1)$.
Therefore, by Case(ii), the orbits $\{\zeta^{n}_{\lambda}(x)\}$
either attract, or repel, or are chaotic for $x\in (-1, 0)\backslash
T_{p}$. Fig.~\ref{webdiagram5r} gives the web diagrams
of dynamics of $\zeta_{\lambda}(x)$ for $\lambda > \lambda^{*}$. 
\end{rem}

\begin{figure}[h]
\begin{center}
\includegraphics[angle=-90, scale=0.38]{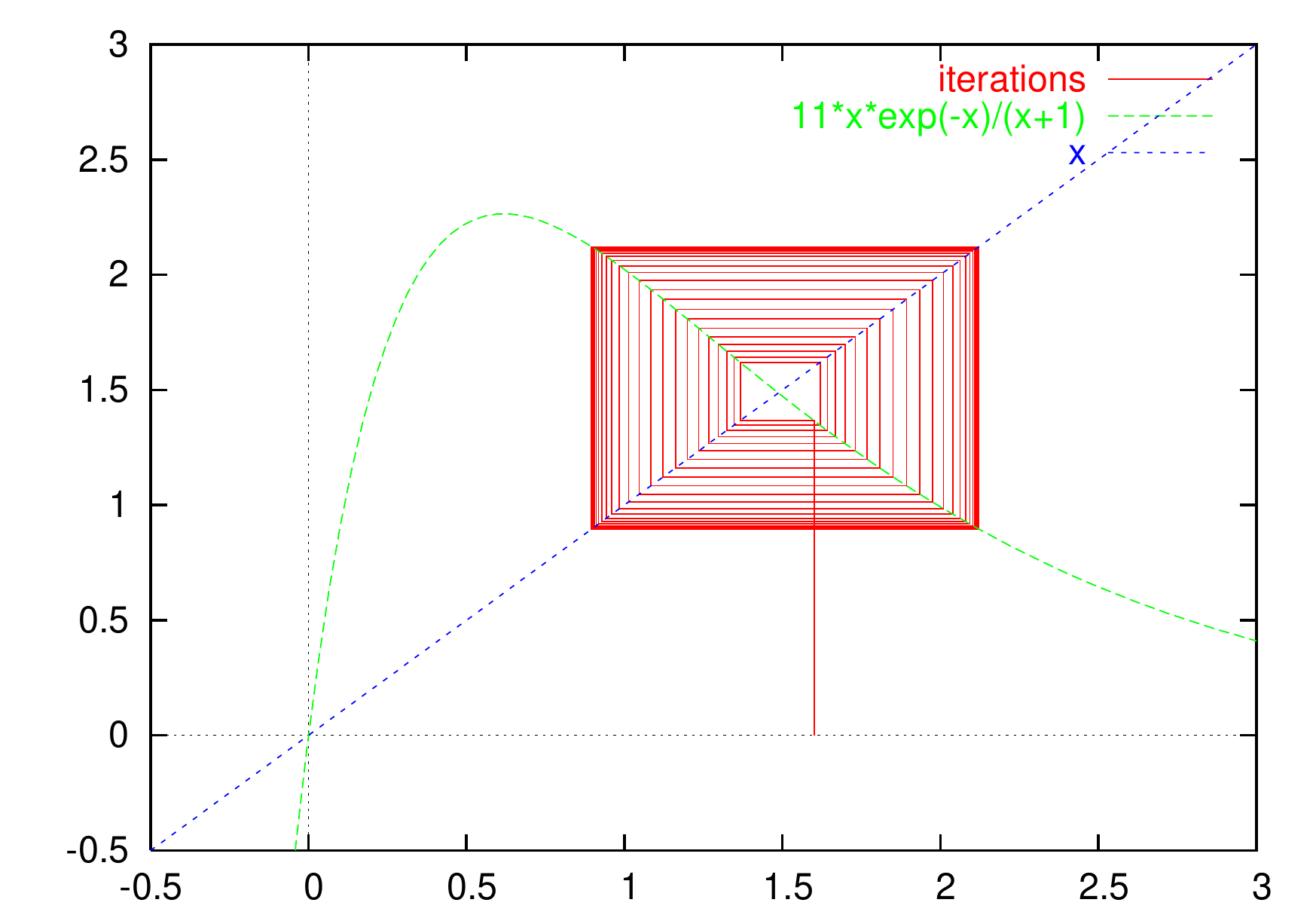}
\end{center}
\caption{Web diagram of $\zeta_{\lambda}(x)$ for $\lambda=11$, $n=50, x_{0}=1.6$} \label{webdiagram5r}
\end{figure}

\begin{rem}
By Theorem~\ref{thm33}, it follows that bifurcations in the dynamics of
the function $\zeta_{\lambda}(x)$, $x \in {\mathbb{R}}\backslash \{-1\}$
occur at several
critical parameter values like $\lambda = 1$ and
$\lambda = (\sqrt{2}+1)e^{\sqrt{2}}$. The bifurcation diagram of the
function $\zeta_{\lambda}(x)$ for
$\lambda > 0$ is given by Fig.~\ref{lambdafix33}.

\begin{figure}[h]
\begin{center}
\includegraphics[height=5.5cm, width=6.8cm]{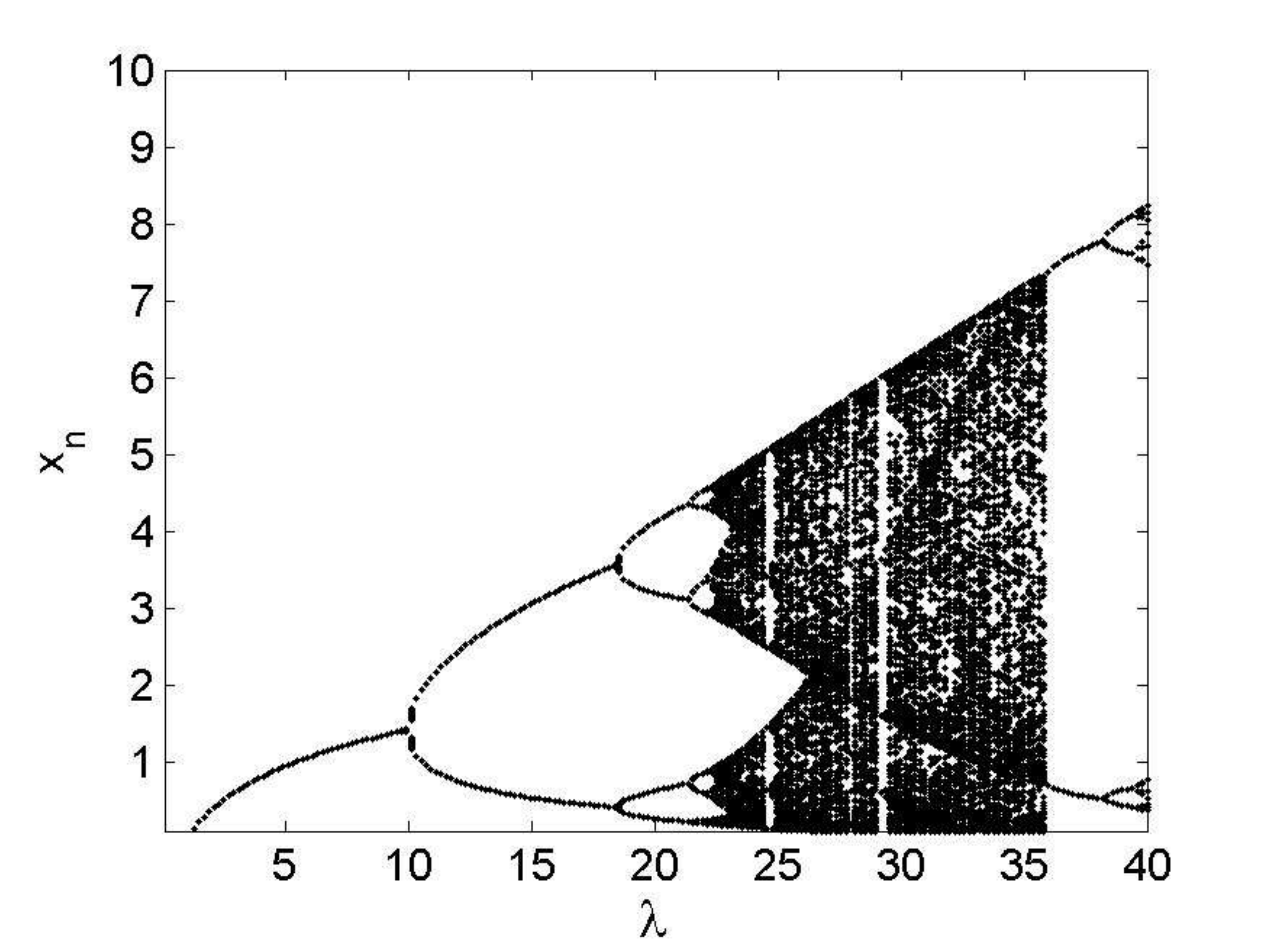}
\caption{Bifurcation diagram of function $\zeta_{\lambda}(x)$}
\label{lambdafix33}
\end{center}
\end{figure}
\end{rem}
In order to quantify chaos in the dynamics, Lyapunov exponents
\begin{equation*}
L(\zeta_{\lambda})=\lim_{k\rightarrow \infty}\frac{1}{k}\sum_{i=0}^{k-1} \ln\bigg(\lambda \frac{|x_{i}^{2}+x_{i}-1| e^{-x_{i}}}{(x_{i}+1)^2}\bigg)
\end{equation*}
of $\zeta_{\lambda}(x)$ for certain values of $\lambda$  are computed for its fixed points and periodic points. From Fig.~\ref{lyapunov}, with $x_{0}$ as a suitable point near a fixed point or periodic point and $k=2000$, shows the computed values of Lyapunov exponents of $\zeta_{\lambda}(x)$ for  $10\le\lambda\le 45$. It is found that, for $\lambda = 25, 42$, the values of  Lyapunov exponent are positive, while, for $\lambda = 2, 12, 18.5, 38$, these are negative. Thus, it is shown that the Lyapunov exponents of $\zeta_{\lambda}(x)$ for certain ranges of the values of the parameter $\lambda$ are positive, demonstrating chaos in the dynamics of the function $\zeta_{\lambda}(x)$ for the values of parameter $\lambda$ in these ranges.

\begin{figure}[h]
\begin{center}
\includegraphics[height=6.0cm, width=9.0cm]{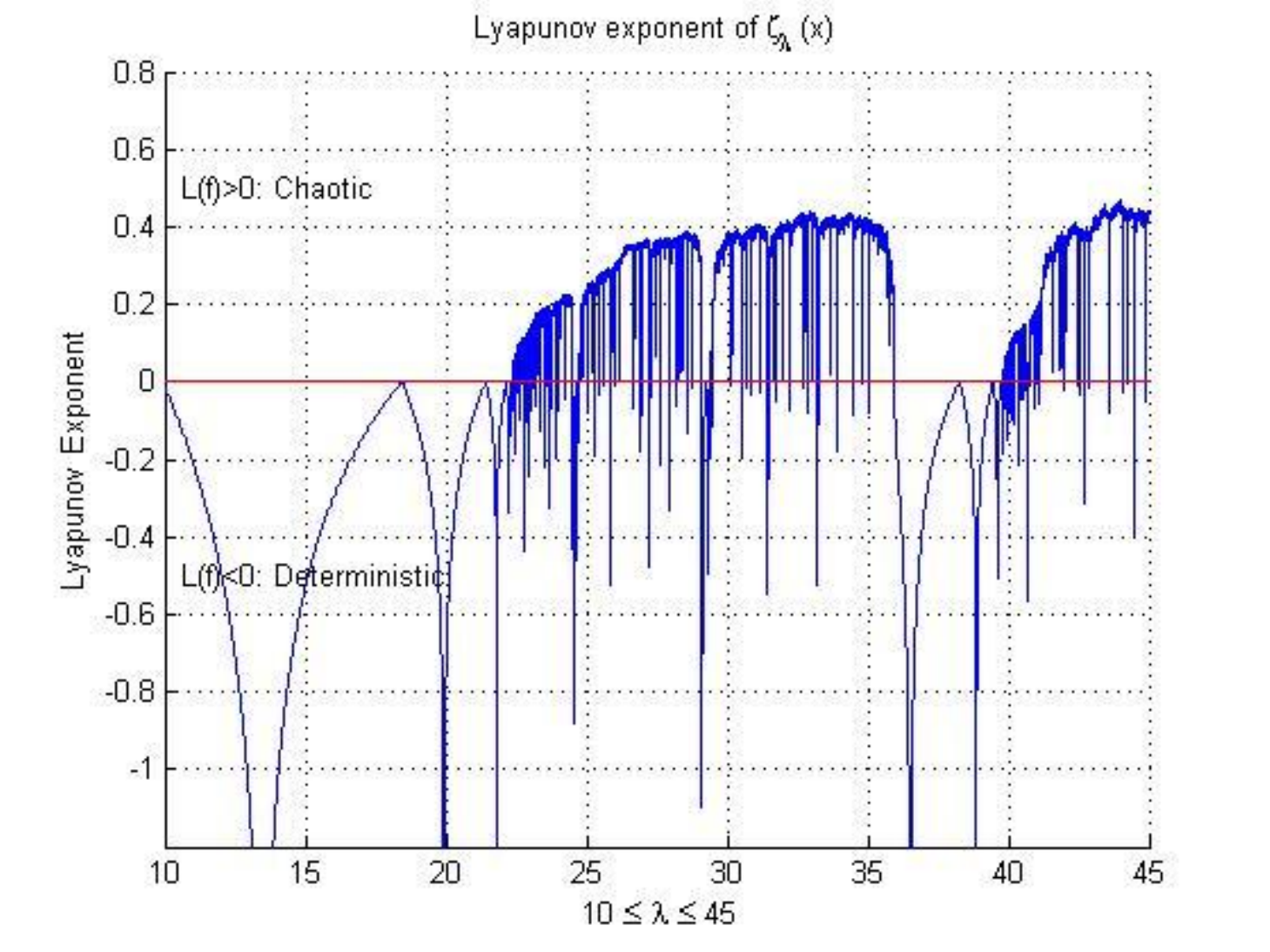}
\caption{Lyapunov Exponents of function $\zeta_{\lambda}(x)$, $10<\lambda<45$}
\label{lyapunov}
\end{center}
\end{figure}

\begin{rem}
Our results on complex dynamics in next
sections are induced from the corresponding results found in the above theorem on real dynamics of the function $\zeta_{\lambda}\in {\cal M}$.
\end{rem}

\section{Complex Dynamics of $\zeta_{\lambda}\in {\cal M}$ and $0<\lambda\leq 1$}\label{sec:a11}
Using Theorem~\ref{thm102}, the complex dynamics of the function
$\zeta_{\lambda}(z)$, $0<\lambda\leq 1$, for $z\in {\mathbb{\hat{C}}}$
 is investigated in the present section. 
\par
If $0<\lambda<1$, by Theorem~\ref{thm102}, the function $\zeta_{\lambda}(z)$ admits the basin of attraction 
${ A(0)}=\{z\in {\mathbb{\hat{C}}} : \zeta^{n}_{\lambda}(z)\rightarrow 0 \;
\text{as} \; n \rightarrow \infty \}$ for its attracting fixed point $0$. The
following theorem gives the characterization of the Julia
set $J(\zeta_{\lambda})$, $0<\lambda< 1$, as the complement of its basin of attraction $A(0)$:
\begin{thm}        \label{thmaa}
Let $\zeta_{\lambda}\in \cal M$, $0<\lambda<1$. Then, 
\begin{equation*}
J(\zeta_{\lambda})={\mathbb{\hat{C}}} \backslash { A(0)}
\end{equation*}
\end{thm}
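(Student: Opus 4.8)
The plan is to prove the equivalent statement $F(\zeta_{\lambda}) = A(0)$ for the Fatou set, whence $J(\zeta_{\lambda}) = \mathbb{\hat{C}} \backslash F(\zeta_{\lambda}) = \mathbb{\hat{C}} \backslash A(0)$. One inclusion is immediate: since $0$ is an attracting fixed point by Theorem~\ref{thm102}(i), its basin $A(0)$ is an open, completely invariant subset of the Fatou set, so $A(0) \subseteq F(\zeta_{\lambda})$, equivalently $J(\zeta_{\lambda}) \subseteq \mathbb{\hat{C}} \backslash A(0)$. The substance of the proof is the reverse inclusion $F(\zeta_{\lambda}) \subseteq A(0)$, i.e. that every stable orbit is drawn to $0$.

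First I would record that $\zeta_{\lambda} \in S$: it has exactly three singular values, the asymptotic value $0$ and the two critical values $\frac{3-\sqrt{5}}{2}\lambda e^{\frac{1-\sqrt{5}}{2}}$ and $\frac{3+\sqrt{5}}{2}\lambda e^{\frac{1+\sqrt{5}}{2}}$, both of which are positive real numbers. By Theorem~\ref{thm1114} the map has no wandering domains and by Theorem~\ref{thm1115} it has no Baker domains. Consequently every component of $F(\zeta_{\lambda})$ is eventually periodic, and every periodic cycle of Fatou components is either the cycle of an attracting basin, a parabolic (Leau) domain, a Siegel disk, or a Herman ring.

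Next I would track the singular orbits. The asymptotic value $0$ is the fixed point itself. Each critical value lies in $(0, \infty)$, so by Theorem~\ref{thm33}(a) its forward orbit converges to $0$; the only exceptional real points are those of $T_{p}$, whose orbits meet the pole, and a critical value lying in $T_{p}$ would force a singular value onto the essential singularity at $\infty$, a degenerate situation which I would exclude. Hence the full postsingular set $P = \overline{\bigcup_{n \geq 0}\{\zeta^{n}_{\lambda}(v_{1}), \zeta^{n}_{\lambda}(v_{2}), 0\}}$ is contained in $A(0) \cup \{0\}$, and in particular is a subset of the real line together with its limit point $0$. Finally I would invoke the singular-value obstructions for the class $S$: the immediate basin of any attracting or parabolic cycle must contain a singular value, but all three singular values are attracted to $0$ and so lie in $A(0)$, ruling out any attracting or parabolic cycle disjoint from $A(0)$; and the boundary of any Siegel disk or Herman ring is contained in $P$, which, being a one-dimensional real set, cannot carry the boundary curve of a rotation domain. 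Thus the only periodic Fatou component is the immediate basin of $0$, and since there are no wandering domains every Fatou component eventually maps into $A(0)$, giving $F(\zeta_{\lambda}) = A(0)$.

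I expect the main obstacle to be the rigorous invocation of the Fatou-component classification and the associated singular-value obstructions in the transcendental meromorphic setting, and in particular the clean exclusion of Siegel disks and Herman rings from the fact that $P$ lies on the real axis, together with the bookkeeping needed to confirm that the two critical values avoid $T_{p}$ and are genuinely captured by the basin of $0$.
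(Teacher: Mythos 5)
Your proposal is correct and follows essentially the same route as the paper: both arguments show that all three singular values (the asymptotic value $0$ and the two positive real critical values) have orbits captured by $A(0)$ via Theorem~\ref{thm33}(a), then use Theorems~\ref{thm1114} and~\ref{thm1115} together with the classification of periodic Fatou components to rule out every component type other than the attracting basin of $0$. The only notable difference is that you exclude parabolic cycles by the standard singular-value obstruction (the immediate basin of a parabolic cycle must contain a singular value), which is cleaner than the paper's appeal to the count of real fixed points, and you explicitly flag the need to check that the critical values avoid $T_{p}$ --- a point that is immediate here since $\zeta_{\lambda}$ maps $(0,\infty)$ into itself, so the positive critical orbits never meet the pole.
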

\begin{proof} Since $z=0$ is an attracting fixed point of $\zeta_{\lambda}(z)$, its
only asymptotic value at $z=0$ lies in the basin of attraction ${ A(0)}$.
The critical values of $\zeta_{\lambda}(z)$ are only finitely many and all of these lie on the
real line. By Theorem~\ref{thm33}(a), the forward orbit of
every critical value tends to attracting fixed point $0$
under iteration. Therefore, all singular
values of $\zeta_{\lambda}(z)$ and their orbits lie in the same
component of ${ A(0)}$. Thus, it follows
that $\zeta_{\lambda}(z)$ has no Siegel disk or Herman ring in
$F(\zeta_{\lambda})$. By
Theorem~\ref{thm102}, $\zeta_{\lambda}(z)$ has only one attracting
fixed point and two repelling fixed points on the real axis so $U$ is
not parabolic domain.
Using Theorem~\ref{thm1114} and Theorem~\ref{thm1115}, the Fatou set $F(\zeta_{\lambda})$ has no wandering
domains and no Baker domains since $\zeta_{\lambda}(z)$ is critically finite meromorphic function.
 Consequently,  the only possible stable component $U$ of
$F(\zeta_{\lambda})$ is the basin of attraction ${ A(0)}$.
\par
Since the Julia set is complement of the Fatou set
$F(\zeta_{\lambda}) = { A(0)}$, the Julia set
$J(\zeta_{\lambda})={\mathbb{\hat{C}}}\backslash {A(0)}$.
\end{proof}

\begin{rem}
By Theorem~\ref{thm33}(a), for $0<\lambda<1$,
$\zeta_{\lambda}^{n}(x) \rightarrow 0$ as $n \rightarrow \infty$ for
$[(-\infty, -1) \cup (-1, r_{\lambda}) \cup (r_{\lambda}, 0) \cup
(0, \infty)]\backslash T_{p}$.  Therefore, it follows that the union of 
intervals ${\mathbb{R}}\backslash \{T_{p}\cup\{r_{\lambda}\}\}$ is
contained in the basin of attraction ${A}(0)$ for $0<\lambda<1$.
\end{rem}

If $\lambda=1$, by Theorem~\ref{thm102}, $\zeta_{\lambda}(z)$ has a real
rationally indifferent fixed point $0$. The following proposition shows
that the Fatou set of $\zeta_{\lambda}(z)$ contains certain intervals
of real line for $\lambda=1$:
\begin{prop}      \label{tpd13}
Let $\zeta_{\lambda}\in \cal M$, $\lambda=1$. Then, $F(\zeta_{\lambda})$ contains the
intervals $(-\infty, -1)$, $(-1, 0)\backslash T_{p}$ and $(0, \infty)$.
\end{prop}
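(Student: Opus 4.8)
The plan is to treat $0$ as a parabolic (rationally indifferent) fixed point and to place each of the three intervals inside its parabolic basin, which is contained in the Fatou set. By Theorem~\ref{thm102}(ii), $\zeta_{\lambda}'(0)=1$ for $\lambda=1$, so the local expansion at the origin has the form $\zeta_{1}(z)=z-2z^{2}+O(z^{3})$, with nonzero quadratic coefficient. Hence $0$ is a parabolic fixed point admitting, by the Leau--Fatou flower theorem, a single attracting petal and a single repelling petal. Introducing the Fatou coordinate $w=1/(2z)$ conjugates $\zeta_{1}$ near $0$ to $w\mapsto w+1+O(1/w)$, and the attracting petal $P^{+}$ is the region where $\mathrm{Re}\,w$ is large and positive, i.e. the sector around the positive real axis. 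In particular $P^{+}$ contains an interval $(0,\varepsilon)$; since iterates converge uniformly to $0$ on $P^{+}$, the family $\{\zeta_{1}^{n}\}$ is normal there and therefore $P^{+}\subset F(\zeta_{1})$.

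First I would handle $(0,\infty)$. By Theorem~\ref{thm33}(b), $\zeta_{1}^{n}(x)\to 0$ for every $x\in(0,\infty)$, and by the monotone behaviour established in its proof the orbit approaches $0$ through the positive reals, i.e. along the attracting direction. Thus for each such $x$ there is $n_{0}$ with $\zeta_{1}^{n_{0}}(x)\in P^{+}\subset F(\zeta_{1})$. Since the Fatou set of a meromorphic function is completely invariant, namely $z\in F(\zeta_{1})\iff\zeta_{1}(z)\in F(\zeta_{1})$, it follows that $x\in F(\zeta_{1})$, whence $(0,\infty)\subset F(\zeta_{1})$.

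The remaining two intervals reduce to the first by the same invariance argument. For $x\in(-\infty,-1)$ one has $\frac{x}{x+1}>0$, so $\zeta_{1}(x)>0$, i.e. $\zeta_{1}$ maps $(-\infty,-1)$ into $(0,\infty)\subset F(\zeta_{1})$, and complete invariance gives $(-\infty,-1)\subset F(\zeta_{1})$. For $x\in(-1,0)\setminus T_{p}$, the forward orbit eventually enters $(-\infty,-1)$ by Case (iii) in the proof of Theorem~\ref{thm33}(b); the removal of $T_{p}$ is precisely what guarantees the orbit never lands on the pole $-1$, where $\zeta_{1}$ sends the point to the essential singularity at $\infty$ and thus into the Julia set. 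Hence some iterate of $x$ lies in $(-\infty,-1)\subset F(\zeta_{1})$, and invariance yields $(-1,0)\setminus T_{p}\subset F(\zeta_{1})$.

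The main obstacle is the local step: verifying that the positive real axis near $0$ lies in the attracting petal rather than the repelling one. This is exactly what the sign of the quadratic coefficient $-2$ decides through the Fatou coordinate $w=1/(2z)$; once the attracting direction is correctly identified as $\mathbb{R}^{+}$, the global conclusion follows purely from Theorem~\ref{thm33}(b), the mapping relations between the intervals, and complete invariance of the Fatou set. I would also note that Theorems~\ref{thm1114} and~\ref{thm1115} rule out wandering and Baker domains, so that the parabolic basin is the only relevant Fatou component and the normality established on $P^{+}$ genuinely propagates, via the backward orbits, to the whole of each interval.
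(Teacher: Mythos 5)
Your proof is correct, and its global skeleton is the one the paper uses: invoke Theorem~\ref{thm33}(b) to get $\zeta_{1}^{n}(x)\to 0$ on the three intervals, and push everything back to $(0,\infty)$ through the mapping relations $(-\infty,-1)\to(0,\infty)$ and $(-1,0)\setminus T_{p}\to(-\infty,-1)$ together with complete invariance of the Fatou set. Where you genuinely go beyond the paper is the local step at $0$. The paper's entire argument is that the orbits converge to $0$ and therefore the points lie in $F(\zeta_{1})$; but for $\lambda=1$ the fixed point $0$ is rationally indifferent, hence lies in the Julia set, and pointwise convergence of an orbit to a point of $J(\zeta_{1})$ does not by itself yield normality. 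Your computation $\zeta_{1}(z)=z-2z^{2}+O(z^{3})$, the identification of the single attracting petal along $\mathbb{R}^{+}$ via the coordinate $w=1/(2z)$, and the observation that the real orbits approach $0$ from the right (so they enter the attracting petal rather than drifting along the repelling direction on the negative real axis) supply exactly the justification the paper omits. The only cosmetic caveat is your phrase that the forward orbit of a point of $(-1,0)\setminus T_{p}$ ``eventually enters'' $(-\infty,-1)$: this is the correct reading, since points of $(-1,0)$ close to $0$ need several iterations (drifting left along the repelling direction) before crossing $-1$, whereas the paper loosely says the forward orbit ``is contained in'' $(-\infty,-1)$. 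In short, your route is the paper's route with the one nontrivial analytic step --- that the positive real axis near $0$ lies in the parabolic basin --- actually carried out.
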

\begin{proof}
By Theorem~\ref{thm33}(b), $\zeta_{\lambda}^{n}(x)
\rightarrow 0$ for $x \in [(-\infty, -1)\cup (-1, 0)\cup (0, \infty)]
\backslash T_{p}$ if $\lambda=1$. Therefore,
the intervals $(-\infty, -1)$, $(-1, 0)\backslash T_{p}$ and $(0,
\infty)$ are contained in the Fatou set $F(\zeta_{\lambda})$.
\end{proof}

\section{Complex Dynamics of $\zeta_{\lambda}\in {\cal M}$, $1 <\lambda \leq\lambda^{*}$ and $\lambda>\lambda^{*}$}\label{sec:c11}
The complex dynamics of the function $\zeta_{\lambda}(z)$ for $z\in
{\mathbb{\hat{C}}}$, $1 < \lambda \leq \lambda^{*}$ and
$\lambda>\lambda^{*}$ is described in this section by using Theorem~\ref{thm102}. 
\par
If $1<\lambda<\lambda^{*}$, by Theorem~\ref{thm102}, the function
$\zeta_{\lambda}(z)$ admits the basin of attraction
${A(a_{\lambda})}=\{z\in {\mathbb{C}} :
\zeta^{n}_{\lambda}(z)\rightarrow a_{\lambda} \;\; \text{as} \; n
\rightarrow \infty \}$ for its attracting fixed point $a_{\lambda}$.
The following theorem gives the characterization of the Julia
set $J(\zeta_{\lambda})$ as the complement of basin of attraction $A(a_{\lambda})$:
\begin{thm} \label{thmcc}
Let $\zeta_{\lambda}\in \cal M$, $1<\lambda<\lambda^{*}$. Then, 
\begin{equation*}
J(\zeta_{\lambda})={\mathbb{\hat{C}}} \backslash { A(a_{\lambda})}
\end{equation*}
\end{thm}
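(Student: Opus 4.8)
The plan is to follow the proof of Theorem~\ref{thmaa} almost verbatim, classifying the periodic components of $F(\zeta_{\lambda})$ and showing that the immediate basin of $a_{\lambda}$ is the only type that can arise. First I would list the singular values: the two (real) critical values $\frac{3-\sqrt{5}}{2}\lambda e^{\frac{1-\sqrt{5}}{2}}$ and $\frac{3+\sqrt{5}}{2}\lambda e^{\frac{1+\sqrt{5}}{2}}$ together with the finite asymptotic value $0$. By Theorem~\ref{thm33}(c), for $1<\lambda<\lambda^{*}$ every point of $\mathbb{R}\setminus T_{p}$, and in particular each critical value, has forward orbit converging to $a_{\lambda}$; hence both critical values and their entire orbits sit in a single component of $A(a_{\lambda})$.

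The one genuine departure from Theorem~\ref{thmaa}, and the step I expect to need the most care, is the asymptotic value. For $0<\lambda<1$ the asymptotic value $0$ was itself the attracting fixed point and so lay harmlessly in the basin; here, by Theorem~\ref{thm102}, $0$ is a \emph{repelling} fixed point, so the asymptotic value lies on $J(\zeta_{\lambda})$. The point to establish is that this still cannot produce a stable component. Since $\zeta_{\lambda}(0)=0$, the forward orbit of the asymptotic value is the single point $\{0\}\subset J(\zeta_{\lambda})$, so the postsingular set $P=\overline{\bigcup_{n\ge 0}\zeta_{\lambda}^{n}(\{\text{singular values}\})}$ is the countable set $\{0\}\cup\{a_{\lambda}\}\cup(\text{the two critical orbits})$, whose only accumulation point is $a_{\lambda}$.

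With $P$ countable I would then exclude the remaining component types exactly as before. The boundary of a Siegel disk or Herman ring must lie in $P$; as this boundary is uncountable while $P$ is countable, $\zeta_{\lambda}$ has no Siegel disk and no Herman ring. A parabolic domain would require a singular value in its immediate basin, but the critical values already lie in $A(a_{\lambda})$ and the orbit of the asymptotic value is the repelling point $0$, so no singular value is available; this is consistent with $\zeta_{\lambda}$ having no rationally indifferent cycle for $1<\lambda<\lambda^{*}$ (Theorem~\ref{thm102}), and hence there is no parabolic domain. Finally, since $\zeta_{\lambda}$ is a critically finite meromorphic function, Theorems~\ref{thm1114} and~\ref{thm1115} rule out wandering domains and Baker domains.

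Combining these exclusions, the only possible stable component of $F(\zeta_{\lambda})$ is the basin $A(a_{\lambda})$, so $F(\zeta_{\lambda})=A(a_{\lambda})$ and therefore $J(\zeta_{\lambda})=\mathbb{\hat{C}}\setminus A(a_{\lambda})$. The crux of the argument is thus the second paragraph: confirming that the asymptotic value, although it now lies in the Julia set, injects only the single repelling point $0$ into the postsingular set and hence cannot support any Fatou cycle.
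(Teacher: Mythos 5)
Your proof is correct and follows the same route as the paper, whose own proof simply states that the argument of Theorem~\ref{thmaa} carries over with $a_{\lambda}$ in place of $0$ and Theorem~\ref{thm33}(c) in place of Theorem~\ref{thm33}(a). In fact your second paragraph supplies a detail that this one-line reduction glosses over: in Theorem~\ref{thmaa} the asymptotic value $0$ was placed in the Fatou set precisely because it was the attracting fixed point, whereas here $0$ is repelling and lies in $J(\zeta_{\lambda})$, so one must argue separately (as you do, via $\zeta_{\lambda}(0)=0$ and the countability of the postsingular set) that this singular value cannot support a rotation domain, a parabolic domain, or another attracting basin.
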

\begin{proof}
By taking the fixed point $a_{\lambda}$ instead of $0$ and using Theorem~\ref{thm33}(c) instead of Theorem~\ref{thm33}(a), the proof of theorem follows on the lines of proof similar to that of Theorem~\ref{thmaa}.
\end{proof}
\begin{rem}
By Theorem~\ref{thm33}(c), for $1<\lambda<\lambda^{*}$,
$\zeta_{\lambda}^{n}(x) \rightarrow a_{\lambda}$ as $n\rightarrow\infty$
for $[(-\infty, -1)\cup (-1, 0) \cup (0, \infty)]\backslash T_{p}$.
Therefore,
it follows that the interval ${\mathbb{R}}\backslash\{T_{p}\cup\{0\}\}$
is contained in the basin of attraction ${ A}(a_{\lambda})$ for $1<\lambda<\lambda^{*}$.
\end{rem}

The following proposition shows that
the Fatou set of $\zeta_{\lambda}(z)$ for $\lambda=\lambda^{*}$ contains certain intervals of real
line:
\begin{prop} \label{thmdd}
Let $\zeta_{\lambda}\in \cal M$, $\lambda=\lambda^{*}$. Then, $F(\zeta_{\lambda})$ contains the
intervals $(-\infty, -1)$, $(-1, 0)\backslash T_{p}$, $(0, \sqrt{2})$ and
$(\sqrt{2}, \infty)$.
\end{prop}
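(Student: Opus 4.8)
The plan is to follow the short argument of Proposition~\ref{tpd13} almost verbatim, with Theorem~\ref{thm33}(d) playing the role that Theorem~\ref{thm33}(b) plays there. First I would record that for $\lambda=\lambda^{*}$ the convergence statement of Theorem~\ref{thm33}(d) gives $\zeta^{n}_{\lambda}(x)\to\sqrt{2}$ as $n\to\infty$ for every $x\in[(-\infty,-1)\cup(-1,0)\cup(0,\infty)]\backslash T_{p}$. On each of the four listed open intervals the iterates therefore converge, locally uniformly, to the constant value $\sqrt{2}$; hence the family $\{\zeta^{n}_{\lambda}\}$ is normal on each interval, and the intervals are contained in the Fatou set $F(\zeta_{\lambda})$.

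The one point demanding extra care, and the reason the interval $(0,\infty)$ of Theorem~\ref{thm33}(d) is here broken as $(0,\sqrt{2})\cup(\sqrt{2},\infty)$, is the fixed point $\sqrt{2}$ itself. By Theorem~\ref{thm102}(iv) the multiplier at $\sqrt{2}$ satisfies $|\zeta'_{\lambda}(\sqrt{2})|=1$, so $\sqrt{2}$ is rationally indifferent and thus belongs to the Julia set rather than to the Fatou set. I would accordingly interpret the convergence in Theorem~\ref{thm33}(d) through the parabolic flower at $\sqrt{2}$: the points attracted to $\sqrt{2}$ approach it along its attracting petals and make up the parabolic basin, which is a union of Fatou components. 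The intervals $(0,\sqrt{2})$ and $(\sqrt{2},\infty)$ lie inside these petals, while $(-\infty,-1)$ maps into $(0,\infty)$ and $(-1,0)\backslash T_{p}$ is carried, after finitely many steps, into $(-\infty,-1)$ and then into $(0,\infty)$; hence the forward orbits of all four intervals enter the parabolic basin, placing them in $F(\zeta_{\lambda})$ while $\sqrt{2}$ itself is excluded.

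I expect the genuine obstacle to be the passage from real convergence on an interval to membership in the complex Fatou set. Because the approach to a parabolic fixed point is algebraic rather than geometric, convergence of the real iterates does not by itself exhibit an open complex neighbourhood on which $\{\zeta^{n}_{\lambda}\}$ is normal. To close this gap I would invoke the Leau--Fatou flower theorem for the first return map: here a direct computation gives $\zeta'_{\lambda}(\sqrt{2})=-1$, so the parabolic structure is carried by $\zeta^{2}_{\lambda}$, and the flower theorem then produces genuine open attracting petals in $\mathbb{C}$ whose real traces are exactly the required intervals. Once the parabolic basin is identified as a Fatou component in this way, the stated inclusions follow precisely as in Proposition~\ref{tpd13}.
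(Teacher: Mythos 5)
Your proposal is correct and follows essentially the same route as the paper: invoke Theorem~\ref{thm33}(d) for the real convergence $\zeta^{n}_{\lambda}(x)\to\sqrt{2}$ and conclude that the four listed intervals lie in $F(\zeta_{\lambda})$. You actually go further than the paper's two-line proof, which passes silently from pointwise real convergence to membership in the complex Fatou set: your appeal to the Leau--Fatou flower for $\zeta^{2}_{\lambda}$ (the multiplier at $\sqrt{2}$ is indeed $-1$, so the orbit of each real point eventually enters an attracting petal and hence the open parabolic basin) is precisely the justification the paper leaves implicit, and it also correctly explains why $\sqrt{2}$ itself is excised from $(0,\infty)$ --- just note that the intervals are not literally contained in the petals, only their forward orbits eventually are, as your final sentence already states.
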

\begin{proof}
By taking the fixed point $a_{\lambda}$ instead of $0$ and using
Theorem~\ref{thm33}(d) instead of Theorem~\ref{thm33}(b), the proof of
proposition follows on the lines of proof similar to that of Proposition~\ref{tpd13}.
\end{proof}
\noindent
Let $P_{a}$ be a set
 of attracting periodic orbits of $\zeta_{\lambda}(x)$. The following theorem describes the complex dynamics of $\zeta_{\lambda}(z)$ for
$z\in \mathbb{\hat{C}}$ and $\lambda >\lambda^{*}$ showing that
the Julia set $J(\zeta_{\lambda})$ contains the subset of real line
consisting of the complement of attracting periodic orbits of
$\zeta_{\lambda}(x)$: 
\begin{thm} \label{thmee}
Let $\zeta_{\lambda}\in\cal M$, $\lambda >\lambda^{*}$. Then,
 $J(\zeta_{\lambda})$
 contains the set  $\mathbb{R} \backslash P_{a}$.
 \end{thm}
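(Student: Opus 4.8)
The plan is to argue, exactly as in the proof of Theorem~\ref{thmaa}, that the only stable (Fatou) behaviour available to a real point of $\zeta_{\lambda}(x)$ when $\lambda>\lambda^{*}$ is convergence to an attracting periodic cycle, so that every real point not lying in the basin $P_{a}$ of such a cycle must belong to $J(\zeta_{\lambda})$. First I would recall from Theorem~\ref{thm102}(v) that the fixed point $0$ is repelling for $\lambda>\lambda^{*}$, and from Remark~\ref{rem21} that the orbit of every $x\in{\mathbb{R}}\backslash T_{p}$ is of exactly one of three types: it converges to an attracting periodic cycle, it repels, or it is chaotic.

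Next I would dispose of the Fatou-component possibilities, reusing the elimination carried out for Theorem~\ref{thmaa}. Since $\zeta_{\lambda}$ is critically finite, Theorems~\ref{thm1114} and~\ref{thm1115} exclude wandering domains and Baker domains. The single asymptotic value $0$ coincides with the repelling fixed point $0$ and hence lies in $J(\zeta_{\lambda})$, while the two critical values are real; because there are only finitely many singular values, no Siegel disk or Herman ring can occur. Consequently the only stable components that can meet the real axis are the immediate basins of the attracting periodic cycles, whose union intersected with ${\mathbb{R}}$ is precisely $P_{a}$.

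With this structure in place the containment follows by cases. For $x\in({\mathbb{R}}\backslash P_{a})\backslash T_{p}$ the orbit $\{\zeta^{n}_{\lambda}(x)\}$ does not converge to any attracting cycle, so by Remark~\ref{rem21} it repels or is chaotic; in either case the family $\{\zeta^{n}_{\lambda}\}$ fails to be normal at $x$, whence $x\in J(\zeta_{\lambda})$. For the remaining points $x\in T_{p}$, namely the backward orbit of the pole $-1$, I would use that the prepoles of a transcendental meromorphic function always lie in the Julia set, so that $T_{p}\subset J(\zeta_{\lambda})$. Combining the two cases yields ${\mathbb{R}}\backslash P_{a}\subseteq J(\zeta_{\lambda})$.

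The hard part will be the rigorous passage from ``the real orbit repels or is chaotic'' to ``$\{\zeta^{n}_{\lambda}\}$ is not normal at $x$,'' together with a clean exclusion of parabolic domains: the delicate situation is an indifferent cycle of the kind produced at the threshold values of $\lambda$ noted in Remark~\ref{rem21}, where one must verify that the associated stable set, should it exist, does not intersect ${\mathbb{R}}\backslash P_{a}$ and therefore does not obstruct the claimed containment.
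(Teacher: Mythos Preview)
Your proposal follows essentially the same route as the paper's proof: invoke Remark~\ref{rem21} to split real orbits into attracting, repelling, or chaotic, conclude that the non-attracting ones lie in $J(\zeta_{\lambda})$, and then add back $T_{p}$ via the fact that prepoles belong to the Julia set. Your inserted Fatou-component classification and your caveats about the passage from ``repels or is chaotic'' to non-normality are more thorough than the paper's own terse argument, but the skeleton is identical.
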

 \begin{proof}
 By Remark~\ref{rem21}, for $\lambda >\lambda^{*}$, it follows that all points on $\mathbb{R}
 \backslash T_{p}$ either attract, or repel, or are chaotic. Therefore,
 Julia set $J(\zeta_{\lambda})$ contains the set $\mathbb{R} \backslash
 \{T_{p}\cup P_{a}\}$. Since pole and preimages
 of the pole are contained in $J(\zeta_{\lambda})$ and attracting orbits are
 contained in the Fatou set, it follows that $J(\zeta_{\lambda})$ contains the set
 $\mathbb{R} \backslash P_{a}$.
 \end{proof}

\begin{rem}
The results in Theorems~\ref{thmaa},~\ref{thmcc} and~\ref{thmee} are obtained for one parameter family of functions $\zeta_{\lambda}(z)$ when $\lambda$ is a real parameter. The ergodic properties of the Julia sets in this case and analogous investigations, when $\lambda$ is a complex parameter, is to follow in a subsequent work. 
\end{rem}

\section{Simulations and Comparisons} \label{algo44}
The characterizations of the Julia set $J(\zeta_{\lambda})$ of 
the function $\zeta_{\lambda}(z)$ in Theorems~\ref{thmaa}, \ref{thmcc}
and \ref{thmee} give the following  algorithm to  computationally
generate the images of the Julia set of $\zeta_{\lambda}(z)$:
{\bf (i)} Select a window W in the plane and divide W into $k\times k$
grids of width d.
{\bf (ii)} For the midpoint of each grid (i.e. pixel), compute the orbit
upto a maximum of $N$ iterations.
{\bf (iii)} If, at $i < N$, the modulus of the orbit is greater than some
given bound
M, the original pixel is colored red and the iterations are stopped.
{\bf (iv)} If no pixel in the modulus of the orbit ever becomes greater than
M, the original pixel is left as color according to number iterations.
Thus, in the output generated by this algorithm, the white points
represent the Julia set of $\zeta_{\lambda}(z)$ and the red points
represent the Fatou set of $\zeta_{\lambda}(z)$.
\par
Using the above algorithm, the Julia sets of a function
$\zeta \in {\cal M}$ for
$\lambda = 0.9$, $\lambda = 1.1$, $\lambda = 9.93$ and $\lambda = 9.94$
are generated  in the rectangular domains
$R = \{ z \in {\mathbb{C}} : -1.0 \le \Re(z) \le 1.0, \;\; -1.0 \le \Im(z) \le 1.0 \}$ upto $N=250$.
The Julia set of the function $\zeta_{\lambda}(z)$ for $\lambda = 0.9$
given by Fig.~\ref{juliasets4}(a) has the same pattern as those of the
Julia sets of $\zeta_{\lambda}(z)$
for any other $\lambda$ satisfying $0 < \lambda < 1$. This conforms to the result
of Theorem~\ref{thmaa}.  The nature of images of the Julia sets of
the function $\zeta_{\lambda}(z)$ have the same pattern
for all $\lambda$ satisfying $1< \lambda < \lambda^{*}$. This is visualized in
Fig.~\ref{juliasets4}(b) for $\lambda = 1.1$ and is in conformity to the result of
Theorem~\ref{thmcc}. The nature of image of the Julia set of
$\zeta_{\lambda}(z)$ for $\lambda = 9.94$ is given by Fig.~\ref{juliasets4}(d).
The Julia set of the function $\zeta_{\lambda}(z)$ for any other $\lambda >
\lambda^{*}$ remains the same as that of $J(\zeta_{\lambda})$ for
$\lambda = 9.94$. On comparison of Julia sets of $\zeta_{\lambda}(z)$
for $\lambda = 9.93 (<\lambda^{*} =(\sqrt{2}+1) e^{\sqrt{2}})$
and $\lambda = 9.94 (>\lambda^{*} =(\sqrt{2}+1) e^{\sqrt{2}})$
({\it Figs.~\ref{juliasets4}(c) and (d)}), it is observed that the phenomenon of chaotic burst
is not visible here. The reason probably
being just that the orbits are chaotic but these remain bounded
after crossing the parameter value.

\begin{figure}[h]
\centering
\subfloat{\includegraphics[angle=0, scale=.25]{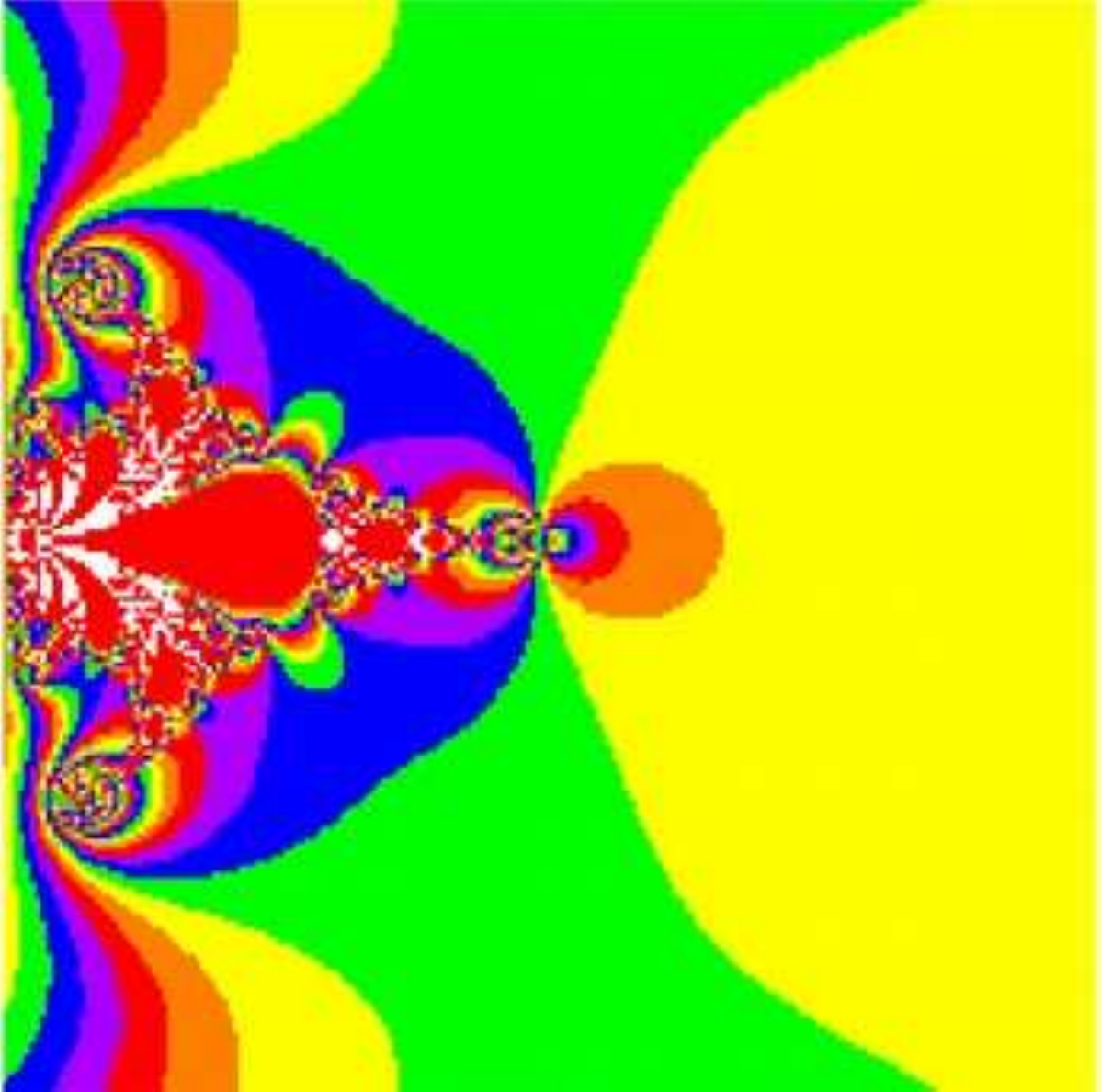}}  \hfill
\subfloat{\includegraphics[angle=0, scale=.25]{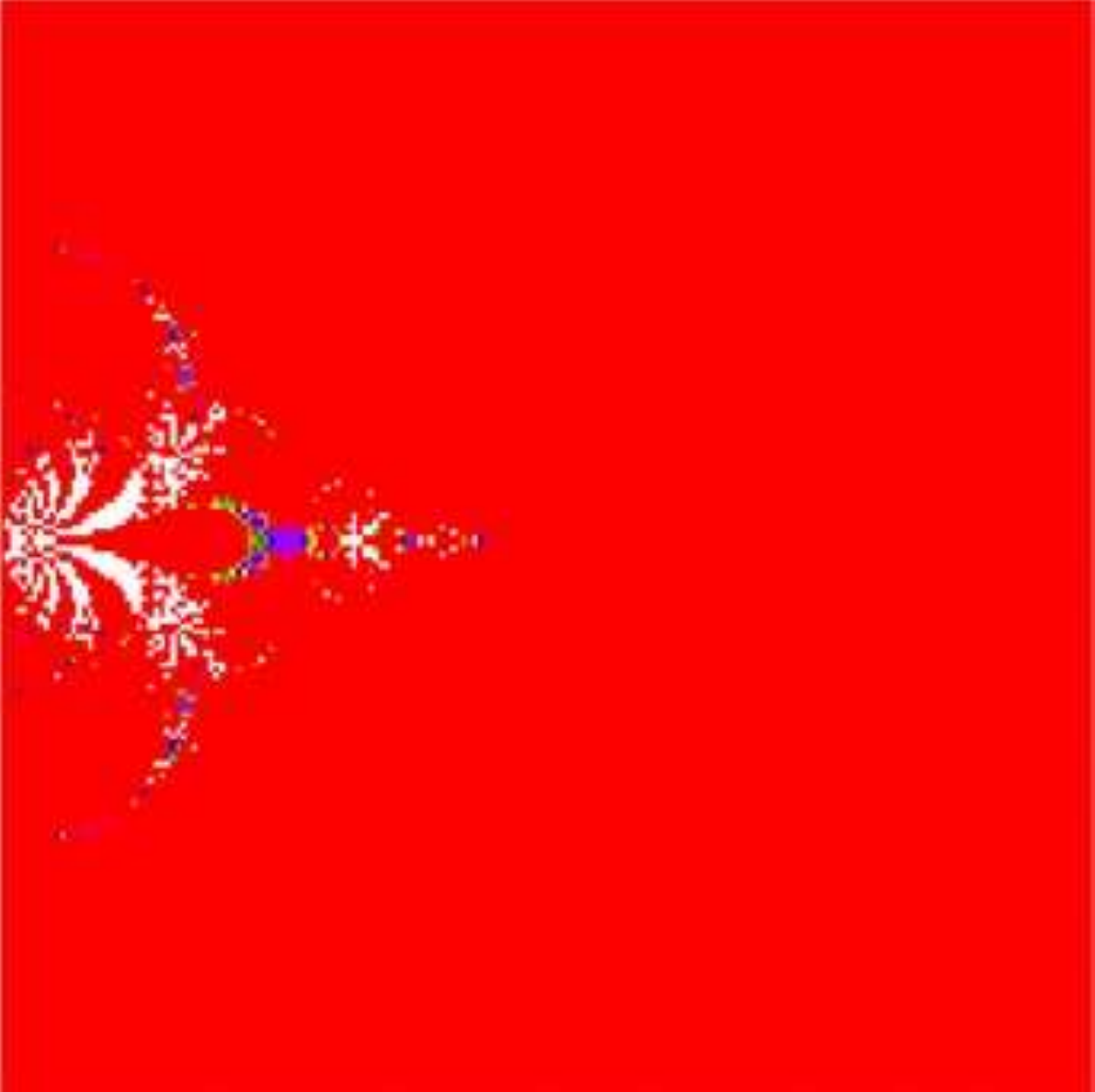}} \hfill
\subfloat{\includegraphics[angle=0, scale=.25]{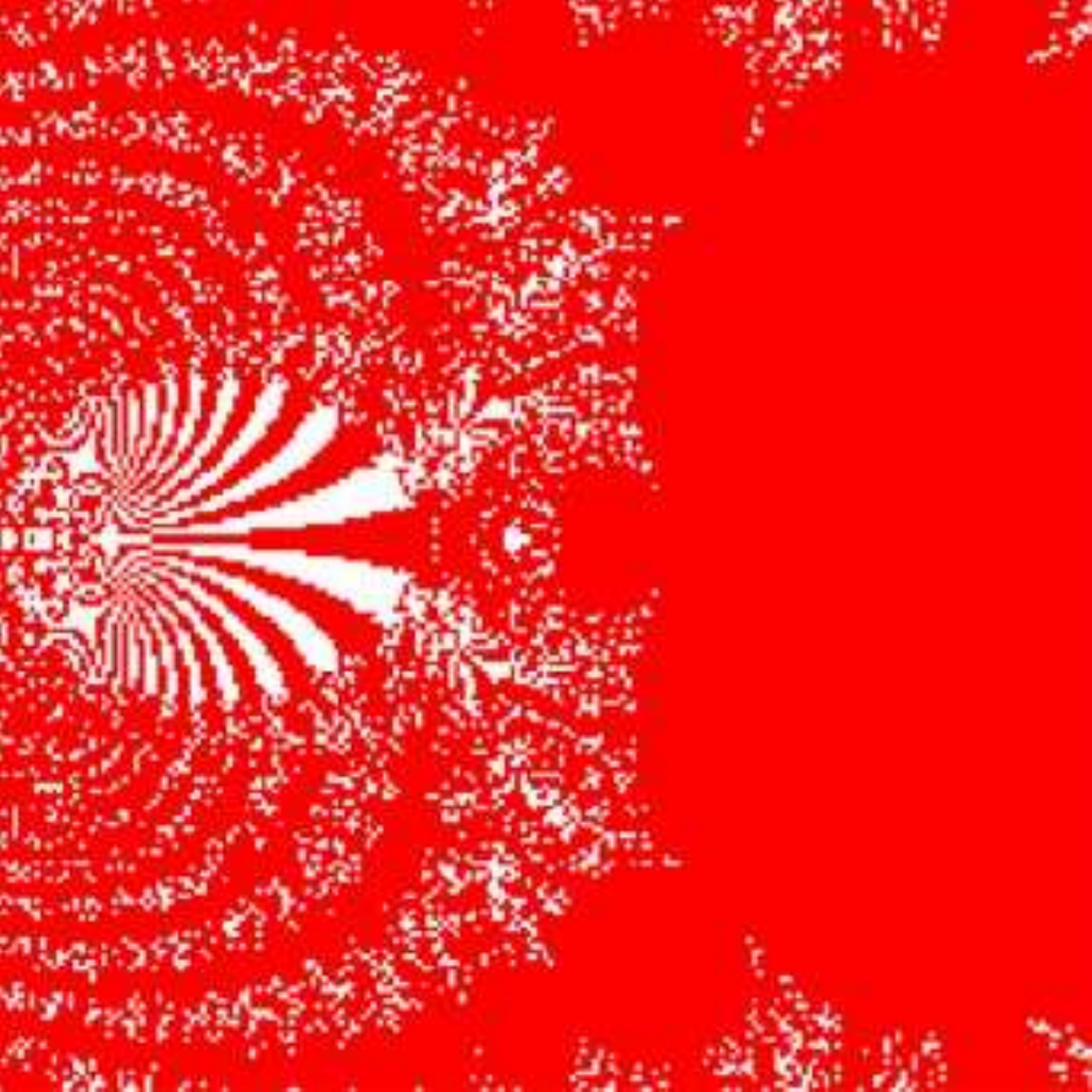}} \hfill
\subfloat{\includegraphics[scale=.25]{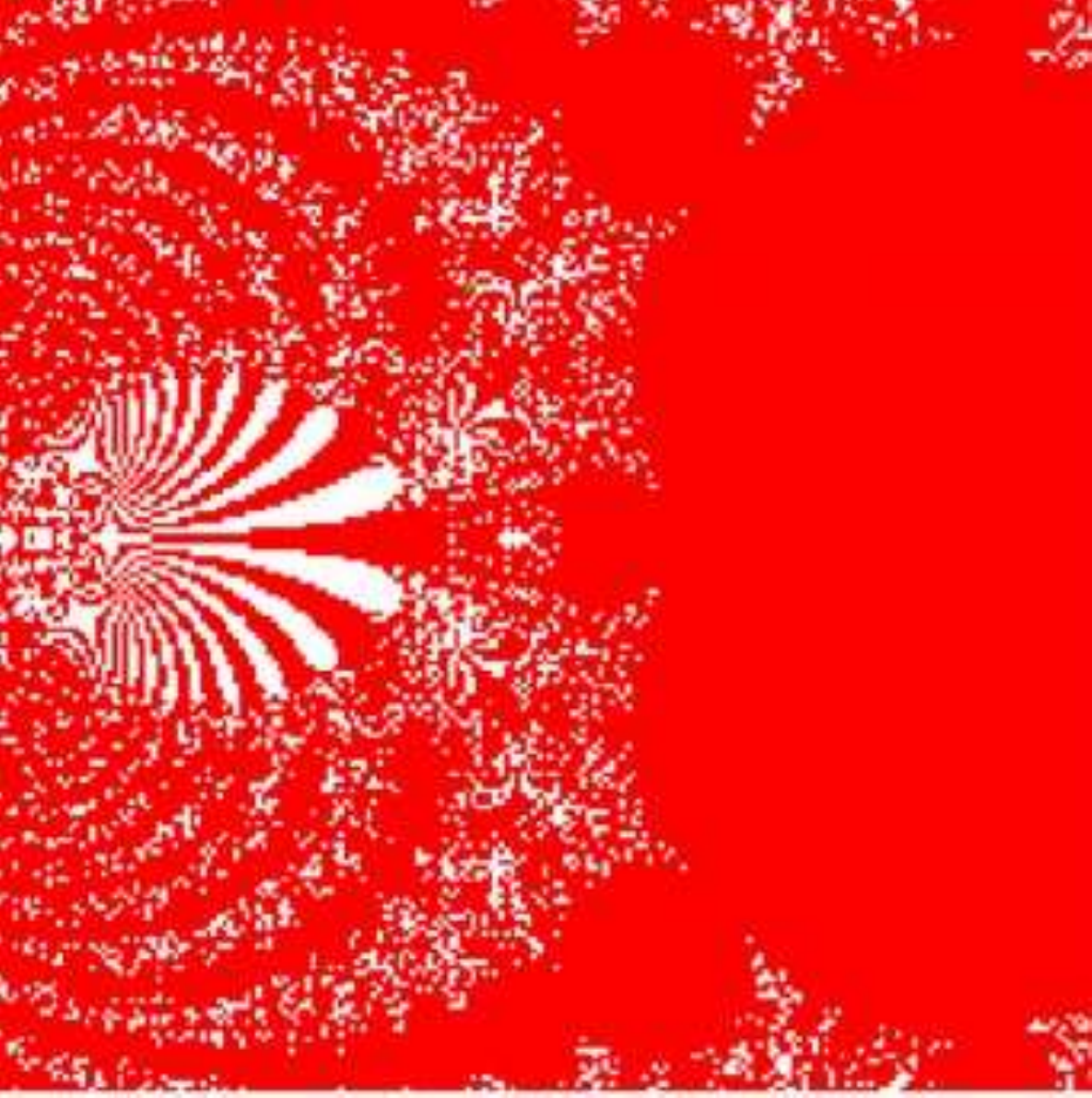}}
\caption{Julia sets of $\zeta_{\lambda}(z)$ for $\lambda=0.9$, $\lambda=1.1$, $\lambda=9.93$ and $\lambda=9.94$}
\label{juliasets4}
\end{figure}

Finally, the comparison between
dynamical properties are found here for the functions $\zeta_{\lambda}(z)$ with the dynamics of functions (i) $T_{\lambda}(z)=\lambda \tan z$, $\lambda \in
{\mathbb{\hat{C}}}\setminus \{0\}$ having polynomial Schwarzian
Derivative~\cite{devkeen1,devtanger,stallard94} (ii) $E_{\lambda}(z) = \lambda \frac{e^{z} -1}{z}$, $\lambda > 0$ having transcendental meromorphic Schwarzian Derivative~\cite{gpkmgpp1} and (iii)
$f_{\lambda}(z)=\lambda f(z)$, $\lambda>0$, where $f(z)$ has only real critical values, and having rational Schwarzian
Derivative~\cite{msgpk}. Based on the results obtained in earlier sections, it is observed that although
the nature of the functions in the families under comparison is
widely different,  the dynamical behaviour of the functions in all
these families have an underlying differences in the nature of dynamics of functions in our family obtained in earlier sections from that of the family of functions considered earlier, bifurcation occurs at different number of parameter values in our family of functions than the number of parameter values at which bifurcation occurs in the families of functions $T_{\lambda}(z)$,  $E_{\lambda}(z)$ and $f_{\lambda}=\lambda f(z)$, the Julia set does not contain whole real line in our family of functions but the Julia sets for functions in the families $T_{\lambda}(z)$,  $E_{\lambda}(z)$ contain the whole real line and Julia set is not the whole complex plane for any value of parameter $\lambda$ for our family of functions but it is whole complex plane for  the family of functions $T_{\lambda}(z)$ for the parameter value $\lambda=i\pi$. Moreover,
these families have an underlying similarity in dynamical behaviour, all singular values of these families are bounded, the Fatou set equals the basin of attraction of the real attracting fixed point,  the Julia set is the closure of escaping points and; Herman rings and wandering domains do not exist.


\begin{thebibliography}{30}
\bibitem{baker1}
I.~N. Baker, J.~Kotus, and Y.~Lu, \emph{Iterates of meromorphic functions~-~{{{I}}}}, Ergodic Theory and Dynam. Sys. \textbf{11} (1991), 241--248.

\bibitem{baker92}
I.~N. Baker, J.~Kotus, and Y.~Lu, \emph{Iterates of meromorphic functions-~{{{IV}}}: Critically finite
  functions}, Results. Math. \textbf{22} (1992), 651--656.

\bibitem{berg93}
W.~Bergweiler, \emph{Iteration of meromorphic functions}, Bull. Amer. Math.
  Soc. \textbf{29} (1993), no.~2, 151--188.

\bibitem{brannerhub92}
B.~Branner and J.~H. Hubbard, \emph{The iterates of cubic polynomials ii}, Acta
  Math. \textbf{169} (1992), 229--325.

\bibitem{cheng89}
Jiu~Yi Cheng, \emph{On the {J}ulia set of the polynomial $f(z)=pz+z^{m}$ with
  $p$ real}, Ann. Acad. Sci. Fenn. Ser. A I Math. \textbf{14} (1989), no.~1,
  169--175.

\bibitem{dev2001}
R.~L. Devaney, \emph{{S}$e^{x}$: {D}ynamics, topology, and bifurcations of
  complex exponentials}, Topology and its Appl. \textbf{110} (2001), 133--161.

\bibitem{devkeen1}
R.~L. Devaney and L.~Keen, \emph{Dynamics of meromorphic maps: Maps with
  polynomial {S}chwarzian derivative}, Ann. Sci. Ec. Norm. Sup. \textbf{22}
  (1989), no.~4, 55--79.

\bibitem{devtanger}
R.~L. Devaney and F.~Tangerman, \emph{Dynamics of entire functions near the
  essential singularity}, Ergodic Theory and Dynam. Sys. \textbf{6} (1986),
  489--503.

\bibitem{doming}
P.~Dominguez, \emph{Dynamics of transcendental meromorphic functions}, Ann.
  Acad. Sci. Fenn., Math. \textbf{23} (1998), 225--250.

\bibitem{dong92}
Xiaoying Dong, \emph{On iteration of a function in the sine family}, J. Math.
  Anal. and Appl. \textbf{165} (1992), 575--586.

\bibitem{douadyhub82}
A.~Douady and J.~H. Hubbard, \emph{Iteration des polynomes quadratiques
  complexes}, C. R. Acad. Sci. Peris Ser. I \textbf{294} (1982), 123--126.

\bibitem{gontar}
V.~Gontar and O.~Grechko, \emph{Fractals sets generated by chemical reactions
  discrete chaotic dynamics}, Chaos, Solitons and Fractals \textbf{32} (April
  2007), no.~2, 496--502.

\bibitem{govin}
M.~Govin and H.~R. Jauslin, \emph{{J}ulia sets in iterative kam methods for
  eigenvalue problem}, Chaos, Solitons and Fractals \textbf{9} (1998), no.~11,
  1835--1846.

\bibitem{hilborn}
Robert~C. Hilborn, \emph{Chaos and nonlinear dynamics: An introduction for
  scientists and engineers}, Oxford University Press, 2nd Ed., 2000.

\bibitem{gpkmgpp1}
G.~P. Kapoor and M.~Guru~Prem Prasad, \emph{Dynamics of $(e^{z} -1)/z$: the
  {J}ulia set and bifurcation}, Ergodic Theory and Dynam. Sys. \textbf{18}
  (1998), no.~6, 1363--1383.

\bibitem{keenkotus97}
L.~Keen and J.~Kotus, \emph{Dynamics of the family $\lambda \tan(z)$},
  Conformal Geometry and Dynamics: An Elect. J. Amer. Math. Soc. \textbf{1}
  (August 1997), 28--57.

\bibitem{kurodajang97}
Tadashi Kuroda and Cheol~Min Jang, \emph{{J}ulia set of the function $z
  \;\exp(z+\mu)$ ii}, Tohoku Math. J. \textbf{49} (1997), 557--584.

\bibitem{morosawa}
S.~Morosawa, Y.~Nishimura, M.~Taniguchi, and T.~Ueda, \emph{Holomorphic
  dynamics}, Cambridge University Press, Cambridge, 2000.

\bibitem{nakagawa}
Masahiro Nakagawa, \emph{Chaos and fractals in engineering}, World Scientific
  Pub Co, 1999.

\bibitem{psj}
H.~O. Peitgen, D.~Saupe, and H.~Jurgens, \emph{Chaos and fractals: New
  frontiers of science}, Springer-Verlag, 1992.

\bibitem{msgpk}
M.~Sajid and G.~P. Kapoor, \emph{Dynamics of a family of transcendental
  meromorphic functions having rational {S}chwarzian derivative}, J. Math.
  Anal. Appl. \textbf{326} (2007), 1356--1369.

\bibitem{stallard94}
Gwyneth~M. Stallard, \emph{The {H}ausdorff dimension of {J}ulia sets of
  meromorphic functions}, J. London Math. Soc. \textbf{49} (1994), no.~2,
  281--295.

\bibitem{yanagotoh98}
Niro Yanagihara and Kayoko Gotoh, \emph{Iteration of the function $c
  \;\exp[az+b/z]$}, Math. Japonica \textbf{48} (1998), no.~3, 341--348.

\bibitem{zheng2kpre}
J.~H. Zheng, \emph{Singularities and limit functions in iteration of
  meromorphic functions}, J. London Math. Soc. \textbf{67} (2003), no.~2,
  195--207.
\end{thebibliography}
\end{document}